\newcommand{\norm}[1]{\left\Vert#1\right\Vert}
\newcommand{\abs}[1]{\left\vert#1\right\vert}
\renewcommand{\mid}{\,|\,}
\newcommand{\diffns}{\mathrm{d}}
\begin{document}

\title{Strong solutions of forward-backward stochastic differential equations with measurable coefficients}

\author[]{Peng Luo}
\author[]{Olivier Menoukeu-Pamen}
\author[]{Ludovic Tangpi}

\abstract{
This paper investigates solvability of fully coupled systems of forward-backward stochastic differential equations (FBSDEs) with irregular coefficients.
In particular, we assume that the coefficients of the FBSDEs are merely measurable and bounded in the forward process.
We crucially use compactness results from the theory of Malliavin calculus to construct strong solutions.
Despite the irregularity of the coefficients, the solutions turn out to be differentiable, at least in the Malliavin sense and, as functions of the initial variable, in the Sobolev sense.
}

\date{\today}
\keyAMSClassification{60H10, 34F05, 60H07, 35D40}
\keyWords{Singular PDEs, Sobolev regularity, FBSDE, singular coefficients, strong solutions, Malliavin calculus.}
\maketitle


\section{Introduction}

The main result of this work concerns the existence of a (strong) solution of the forward-backward stochastic differential equation (FBSDE)
\begin{equation}
\label{eq:fbsde}
	\begin{cases}
		X_t = x + \int_0^tb(u,X_u, Y_u, Z_u)\,\diffns u + \int_0^t\sigma \,dW_{u}\\
		Y_t = h(X_T) + \int_t^Tg(u,X_u,Y_u, Z_u)\diffns u - \int_t^TZ_u\,dW_{u}
	\end{cases}
\end{equation}
with $b,g$ and $h$ measurable in $(t,x)$, and uniformly continuous in $(y,z)$, see Theorem \ref{thm:exists fbsde}.
The proof of this result is partly inspired from results by \citet{Ma-Zhang11} and \citet{DeGua06} on \emph{weak} solutions of FBSDE under similar conditions.
Our contribution in this direction is to obtain \emph{strong} solutions and allow irregularity of $h$.
Because of the lack of regularity of the coefficients, usual fixed point and Picard iterations techniques cannot be applied here.
Let us briefly describe our method:

We start as in \cite{Ma-Zhang11,DeGua06} by approximating the functions $b,g$ and $h$ by smooth function, e.g. by mollification.
The FBSDE associated to these functions admit unique solutions $(X^n,Y^n,Z^n)$ and a so-called decoupling field $v_n$ which is the classical solution of an associated quasilinear PDE.
The function $v_n$ is called a decoupling field because it holds \begin{equation}
\label{eq:decoupling intro}
	Y_t^n = v_n(t,X^n_t)\quad \text{and}\quad Z^n_T = D_xv_n(t,X^n_t)\sigma,
\end{equation}
which allows to decouple the system.
The problem is now to derive strong limits for the above sequences and to show that these limits satisfy the desired equation.
Using classical a priori estimations for such equations, (see e.g. \cite{Ladyz-Sol-Ura68} or the statements recalled in the Appendix) it can be shown that for for every $\delta>0$ and every $t \in [0,T-\delta]$ the sequence of functions $v_n$ admits some compactness properties allowing to derive a limit $v$ for $v_n$ and a limit $w$ for $D_xv_n$.
When $h$ is sufficiently regular, say H\"older continuous, $\delta$ can be taken equal to zero.
In this setting, the idea of \cite{Ma-Zhang11,DeGua06} is to also gain sufficiently good control over the time-derivative and the Hessian using e.g. Calderon-Zygmund theory.
The approach proposed here is to rather use ideas from Malliavin calculus, notably the compactness principle due to \citet{DPMN92}, to find a limit $X$ of the sequence $(X^n)$ in the strong sense.
Together with the representation \eqref{eq:decoupling intro}, this allows to find strong limits for $Y$ and $Z$ (at least for $t$ small enough).
It remains to verify that the limiting processes $(X, Y, Z)$ actually solve the desired equation.


We further study regularity properties of solutions.
In fact, despite the singularity of the coefficients, it turns out that the solutions enjoy satisfactory regularity, at least in the Malliavin and Sobolev sense.
These are interesting results in that, the convention in the field is that solutions inherit the regularity properties of the coefficients \cite{Luo-Tangpi17,Ank-Imk-Reis07}.

FBSDEs are an essential tool in the investigation of stochastic control problems and stochastic differential games.
Due to Pontryagin's stochastic maximum principle, they can be used to characterize optimal controls and Nash equilibriums \cite{Pen90,MR3752669}.
These equations also provide a probabilistic approach to deal with quasilinear parabolic partial differential questions via the nonlinear Feynman-Kac formula initiated by \citet{Pardoux-Peng92} and further developed notably in \cite{kobylanski01,Bah-Edd-Ouk17,Delarue03,Par-Tang99}.
As a result, FBSDEs have received a lot of attention in the applied probability community and appear in various applications, we refer for instance to \cite{Hor-etal,optimierung,Cvi-Zha,Fro-Imk-Pro,Mik-Thi06} and the references therein.
When the coefficients of the equations, i.e. the functions $b, g $ and $h$ are sufficiently smooth, solvability of \eqref{eq:fbsde} is well-understood.
Refer for instance to \cite{MPY94,Delarue,Peng-Wu99} for the case of equations with Lipschitz continuous coefficients and to \cite{Kup-Luo-Tang18,Luo-Tangpi17} for locally Lipschitz coefficients.
When the coefficients are not regular enough, while an SDEs theory is well-developed (see e.g. \cite{MMNPZ13,SDERough,MNP2015,Bahlali99,KR05}) BSDEs with irregular coefficients are less well-studied.
A notable exception is the notion of \emph{weak solution} of FBSDE (very analogous to weak solutions of SDEs) introduced by \citet{Buck-Eng05} and further investigated in \cite{DeGua06,Ma-Zhang11,Ma-Zhang-Zheng}.
These solutions are constructed on a probability space that is possibly different from the underlying probability space. 
On the other hand, more recently, \citet{Isso-Jing20} studied two new classes of multidimensional FBSDEs with distributional coefficients.
In many applications, for instance to the construction of feedback solutions of stochastic control problems, it is important to have \emph{strong solutions}, and to analyze regularity properties thereof.



The remainder of the paper is organized as follows:
In the next section, we make precise the mathematical setting of the work and state the main results: Existence of strong solutions for FBSDEs with rough coefficients.
The proof is given in Section \ref{sec:fbsde proof}.
The regularity of the solutions of the FBSDE is analyzed in Section \ref{sec:regu fbsde}.
We consider both regularity in the Malliavin (variational) sense and in the Sobolev sense.

\section{Setting and main results}
Let $T \in (0,\infty)$ and $d \in \mathbb{N}$ be fixed and consider a probability space $(\Omega, {\cal F}, P)$ equipped with the completed filtration $({\cal F}_t)_{t\in[0,T]}$ of a $d$-dimensional Brownian motion $W$.
Throughout the paper, the product $\Omega \times [0,T]$ is endowed with the predictable $\sigma$-algebra. Subsets of $\mathbb{R}^k$, $k\in\mathbb{N}$, are always endowed with the Borel $\sigma$-algebra induced by the Euclidean norm $|\cdot|$.
Let us consider the following conditions:
\begin{enumerate}[label = (\textsc{A1}), leftmargin = 30pt]
	\item The function $b:[0,T]\times\mathbb{R}^d\times \mathbb{R}^l\times \mathbb{R}^{l\times d}\to \mathbb{R}^d$ is Borel measurable and it
	 holds
	\begin{equation*}
		|b(t, x,y,z)| \le k_1( 1+|y| +|z|)
	\end{equation*}	
	for some $k_1\ge 0$ and every $(x,y,z) \in \mathbb{R}^d\times\mathbb{R}^l\times\mathbb{R}^{l\times d}$.
	Moreover, for each fixed $(t,x)$ the restriction of $b(t,x,\cdot,\cdot)$ to the ball $$B_{R}(0) := \{(y,z): |y|\le R; \,\, |z|\le R \}$$ is continuous, with $R :=k_3e^{Tk_2}.$
	\label{a1}
\end{enumerate}
\begin{enumerate}[label = (\textsc{A2}), leftmargin = 30pt]
	\item $\sigma \in \mathbb{R}^{d\times d}$ and $\xi\sigma\sigma^*\xi>\Lambda|\xi|^2$ for some $\Lambda>0$ and for all $\xi \in \mathbb{R}^d$.\label{a2}
\end{enumerate}
\begin{enumerate}[label = (\textsc{A3}), leftmargin = 30pt]
	\item The function $g:[0,T]\times \mathbb{R}^d\times \mathbb{R}^l\times \mathbb{R}^{l\times d}\to \mathbb{R}^l$ is measurable, uniformly continuous in $(y,z)$, uniformly in $(t,x)\in [0,T]\times \mathbb{R}^d$ and satisfies
	\begin{equation*}
		 |g(t,x,y,z)|\le k_2(1 + |y| + |z|)
	\end{equation*}
	for some $k_2\ge0$,
	 and for every $(t,x,y,z)\in[0,T]\times \mathbb{R}^d\times\mathbb{R}^{l}\times \mathbb{R}^{l\times d}$.
	 \label{a3}
\end{enumerate}
\begin{enumerate}[label = (\textsc{A4}), leftmargin = 30pt]
	\item The function $h:\mathbb{R}^d\to \mathbb{R}^l$ is measurable and satisfies
	\begin{equation*}
	 	|h(x)|\le k_3
	 \end{equation*}
	 for some $k_3\ge0$ and for every $x \in \mathbb{R}^d$.\label{a4}
\end{enumerate}
The following is our first main result: In its statement, the space ${\cal S}^2(\mathbb{R}^d) \times{\cal S}^2(\mathbb{R}^{ l})\times{\cal H}^2(\mathbb{R}^{l\times d})$ is defined as follows:
For $p \in [1, \infty]$ and $k\in\mathbb{N}$, denote by ${\cal S}^p(\mathbb{R}^k)$ the space of all adapted continuous processes $X$ with values in $\mathbb{R}^k$ such that $\norm{X}_{{\cal S}^p(\mathbb{R}^k)}^p :=  E[(\sup\nolimits_{t \in [0,T]}\abs{X_t})^p] < \infty$, and by ${\cal H}^p(\mathbb{R}^{k})$ the space of all predictable processes $Z$ with values in $\mathbb{R}^{k}$ such that $\norm{Z}_{{\cal H}^p(\mathbb{R}^{k})}^p := E[(\int_0^T\abs{Z_u}^2\diffns u)^{p/2}] < \infty$.

\begin{theorem}
\label{thm:exists fbsde}
	Assume that the conditions \ref{a1}-\ref{a4} hold and that one of the following assumptions is satisfied:
	\begin{enumerate}[label = (\textsc{B1}), leftmargin = 30pt]
		\item $b$ and $g$ are bounded in $z$, i.e. $|b(t,x,y,z)|+ |g(t,x,y,z)|\le C(1 + |y|)$ for all $t,x,y,z$ for some $C\ge0$.
		\label{b1}
	\end{enumerate}
	\begin{enumerate}[label = (\textsc{B2}), leftmargin = 30pt]
		\item $h$ is Lipschitz continuous: $|h(x) - h(x')| \le k_3|x - x'|$ for every $x,x' \in \mathbb{R}^d$.
		\label{b2}
	\end{enumerate}
	Then the FBSDE \eqref{eq:fbsde} admits a solution $(X, Y ,Z) \in {\cal S}^2(\mathbb{R}^d) \times{\cal S}^2(\mathbb{R}^{ l})\times{\cal H}^2(\mathbb{R}^{l\times d})$ such that
	\begin{equation*}
		Y_t = v(t, X_t),\quad Z_t = w(t, X_t)\sigma\quad P\otimes dt\text{-a.s.}
	\end{equation*}
	for some measurable functions $v:[0,T]\times \mathbb{R}^d\to \mathbb{R}^l$ and $w:[0,T]\times \mathbb{R}^d \to \mathbb{R}^{l\times d}$.
\end{theorem}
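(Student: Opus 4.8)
The plan is to run the approximation scheme outlined in the introduction. First I would mollify $b,g$ and $h$ in the variables $(t,x)$, preserving continuity in $(y,z)$, to obtain smooth coefficients $b_n,g_n,h_n$ that still satisfy \ref{a1}--\ref{a4} together with \ref{b1} (resp.\ \ref{b2}) with the same constants, and that converge to $b,g,h$ almost everywhere and in $L^p$ on compact sets for every finite $p$. For each $n$, smoothness and the non-degeneracy \ref{a2} place us within the classical theory of FBSDEs with regular coefficients and of quasilinear parabolic equations (see \cite{MPY94,Delarue,Ladyz-Sol-Ura68}): there is a unique solution $(X^n,Y^n,Z^n)$ together with a classical decoupling field $v_n\in C^{1,2}$ of the quasilinear PDE associated to \eqref{eq:fbsde}, so that
\begin{equation*}
	Y^n_t=v_n(t,X^n_t),\qquad Z^n_t=D_xv_n(t,X^n_t)\sigma .
\end{equation*}
Standard a priori bounds for such systems yield $\sup_n\norm{v_n}_\infty\le R$ and $\sup_n\big(\norm{X^n}_{{\cal S}^2}+\norm{Y^n}_{{\cal S}^2}+\norm{Z^n}_{{\cal H}^2}\big)<\infty$; in particular $b_n$ is only ever evaluated at $(y,z)\in B_R(0)$, where $b$ is continuous.

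Then I would invoke the interior a priori estimates for quasilinear parabolic equations (cf.\ \cite{Ladyz-Sol-Ura68} and the Appendix), which provide, for each $\delta>0$, uniform bounds and equicontinuity of $v_n$ and $D_xv_n$ on compact subsets of $[0,T-\delta]\times\mathbb R^d$. Arzel\`a--Ascoli and a diagonal extraction then furnish a subsequence along which $v_n\to v$ and $D_xv_n\to w$ locally uniformly on $[0,T)\times\mathbb R^d$, with $\norm{v}_\infty\le R$. Under \ref{b2} the Lipschitz bound on $h$ propagates to a uniform Lipschitz bound on $v_n(t,\cdot)$ on all of $[0,T]$, so that $\delta$ may be taken equal to $0$ and $Z^n$ is uniformly bounded.

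The core step is to gain \emph{strong} compactness of $(X^n)$. Substituting the decoupling relations into the forward equation shows that $X^n$ solves the stochastic differential equation
\begin{equation*}
	X^n_t=x+\int_0^t\tilde b_n(u,X^n_u)\diff u+\sigma W_t,\qquad \tilde b_n(u,x):=b_n\big(u,x,v_n(u,x),D_xv_n(u,x)\sigma\big),
\end{equation*}
and $\tilde b_n$ is bounded uniformly in $n$ on all of $[0,T]$: under \ref{b1} because $\abs{b_n}\le C(1+\abs{v_n})\le C(1+R)$, and under \ref{b2} because $D_xv_n$ is uniformly bounded. For such an equation, with $\sigma$ constant and non-degenerate and drift merely bounded and measurable, Malliavin calculus arguments based on Girsanov's theorem (in the spirit of \cite{MMNPZ13}) yield uniform bounds on the Malliavin derivatives of $X^n_t$ in $L^2(\Omega)$, together with the fractional-Sobolev control in the time variable demanded by the compactness criterion of \citet{DPMN92}. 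Hence $\{X^n_t\}_n$ is relatively compact in $L^2(\Omega)$ for each $t$; since the drift term $\int_0^\cdot\tilde b_n(u,X^n_u)\diff u$ is uniformly Lipschitz in time, a further extraction gives $X^n\to X$ in ${\cal S}^2(\mathbb R^d)$ for some continuous adapted process $X$. Setting $Y_t:=v(t,X_t)$ for $t<T$ (which extends continuously to $[0,T]$, with $Y_T=h(X_T)$ a posteriori) and $Z_t:=w(t,X_t)\sigma$, the locally uniform convergence of $v_n$ and $D_xv_n$, combined with $X^n\to X$ and the uniform bounds, gives $Y^n\to Y$ in ${\cal S}^2$ and $Z^n\to Z$ in $P\otimes dt$-measure; the uniform BMO bound on $(Z^n)$ (available under \ref{b1}, trivial under \ref{b2}) upgrades the latter to convergence in ${\cal H}^2$, and by construction $Y\in{\cal S}^2$, $Z\in{\cal H}^2$.

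Finally, it remains to pass to the limit in \eqref{eq:fbsde}. The delicate point is the convergence of $b_n(\cdot,X^n,Y^n,Z^n)$, $g_n(\cdot,X^n,Y^n,Z^n)$ and $h_n(X^n_T)$, since $b,g,h$ are only measurable in $(t,x)$. Here \ref{a2} enters once more: the uniform boundedness of $\tilde b_n$ implies, through Girsanov's theorem, a uniform bound $p^n_t\le\bar p$ on the densities of $X^n_t$ with $\bar p$ integrable, hence Krylov-type occupation estimates $E\int_0^T\abs{f(u,X^n_u)}\diff u\le C\norm{f}_{L^{d+1}([0,T]\times\mathbb R^d)}$ uniform in $n$. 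Approximating $b,g,h$ by continuous functions in the relevant $L^{d+1}$ (resp.\ $L^2(\bar p\diff x)$) norms and combining these estimates with the uniform continuity of $b$ and $g$ in $(y,z)$ on $B_R(0)$ and with $X^n\to X$, $Y^n\to Y$, $Z^n\to Z$, one obtains $b_n(\cdot,X^n,Y^n,Z^n)\to b(\cdot,X,Y,Z)$ and $g_n(\cdot,X^n,Y^n,Z^n)\to g(\cdot,X,Y,Z)$ in $P\otimes dt$-measure and $h_n(X^n_T)\to h(X_T)$ in $L^2(\Omega)$; the uniform bounds then allow passage to the limit, showing that $(X,Y,Z)$ solves \eqref{eq:fbsde} with $Y_t=v(t,X_t)$ and $Z_t=w(t,X_t)\sigma$. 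I expect the two principal obstacles to be the uniform Malliavin / fractional-Sobolev estimate feeding the \citet{DPMN92} criterion --- the genuinely new ingredient compared with \cite{Ma-Zhang11,DeGua06} --- and keeping the decoupling field under control up to the terminal time $T$ when $h$ is irregular, which is precisely what the alternative hypotheses \ref{b1}/\ref{b2} are designed to handle.
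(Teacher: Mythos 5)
Your plan coincides with the paper's proof in all essential respects: mollification, the decoupling fields $v_n$ with the interior gradient estimates from the quasilinear PDE theory, uniform boundedness of $\tilde b_n$ feeding the compactness criterion of \cite{DPMN92} to get strong $L^2$ convergence of $X^n_t$, and a Girsanov transformation to pass to the limit in the measurable coefficients (the paper works with the explicit exponential density and dominated convergence where you invoke Krylov-type occupation estimates; in the present constant-diffusion, bounded-drift setting these are interchangeable).

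The one genuine gap is at the terminal time under \ref{b1}. You assert that $Y_t=v(t,X_t)$ ``extends continuously to $[0,T]$, with $Y_T=h(X_T)$ a posteriori,'' but when $h$ is merely bounded measurable the fields $v_n$ are equicontinuous only on $[0,T-\delta]$, so $v$ is constructed only on $[0,T)$ and need not extend continuously to $t=T$. All your limit arguments therefore establish the backward equation only on intervals $[t,T-\delta^k]$, and two further facts are needed to close it up at $T$: that $Y_{T-\delta^k}\to Y_T$ $P$-a.s., and that $Y_T=h(X_T)$. The second is covered by the Girsanov/occupation argument you sketch for $h_n(X^n_T)$, but the first does not follow from anything you wrote. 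The paper obtains it by taking Mazur-type convex combinations of the weakly convergent sequence $(Y^n_T)$, writing the BSDE for these combinations, and using the uniform bound on $g_n(u,X^n_u,Y^n_u,Z^n_u)$ (which is where \ref{b1}/\ref{b2} enter again) to get $\big|Y_{T-\delta^k}-E[Y_T\mid{\cal F}_{T-\delta^k}]\big|\le C\delta^k+o(1)$, before invoking continuity of martingales in the Brownian filtration. Relatedly, your claimed upgrades to ${\cal S}^2$ convergence of $X^n$ and $Y^n$ are stronger than what is available before the verification step is complete; only $L^2(P)$ convergence at each fixed $t$ and ${\cal H}^2$ convergence on $[0,T-\delta]$ are actually in hand at that stage, and they suffice for the conclusion.
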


The proof of Theorem \ref{thm:exists fbsde} will be given in Subsection \ref{sec:fbsde proof}.

\section{FBSDEs with measurable coefficients}
\subsection{proof of Theorem \ref{thm:exists fbsde}}
\label{sec:fbsde proof}
This section is entirely dedicated to the proof of Theorem \ref{thm:exists fbsde}.
Throughout, the conditions \ref{a1}-\ref{a4} are in force.
Let $(b_n)_n$, $(g_n)_n$ and $(h_n)_n$ be sequences of smooth functions with compact support converging pointwise to $b$, $g$ and $h$, respectively (e.g. obtained by standard mollification).
We can assume without loss of generality that for each $n$, the functions $h_n$ and $g_n$ satisfy \ref{a3}-\ref{a4}
in addition to being smooth and Lipschitz continuous (but with Lipschitz constant possibly depending on $n$).
These sequences will be used throughout the proof.
We begin the proof with the following simple lemma which shows that the sequence $b_n$ can be chosen so that the convergence holds uniformly on a given compact in $(y,z)$ and $g_n$ such that the convergence holds locally uniformly in $(y,z)$.
This will be needed at the end of the proof.
\begin{lemma}
\label{lem:molli conver}
	The sequence of mollifiers $(g_n)$ converges to $g$ pointwise in $(t,x)$ and uniformly in $(x,y)$.
	That is, for every $t,x$ it holds that
	\begin{equation*}
		\lim_{n\to \infty}\sup_{(y,z) \in \mathbb{R}^{l+ l\times d}}|g_n(t,x,y,z) - g(t,x,y,z)| = 0.
	\end{equation*}
	Similarly, $(b_n)$ converges to $b$ pointwise in $(t,x)$ and uniformly in $(y,z)$ on the ball of radius $R$ centered at the origin.
\end{lemma}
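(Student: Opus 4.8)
The plan is to read the two convergence claims directly off the regularity that \ref{a1} and \ref{a3} impose in the $(y,z)$-variables, and to perform the mollification \emph{only} in those variables, so that the linear growth in $(y,z)$ is never coupled to the smoothing. Concretely, I would fix a nonnegative $\phi \in C_c^\infty(\mathbb{R}^{l+l\times d})$ with $\int\phi=1$ supported in the unit ball, set $\phi_n(p,q)=n^{l+ld}\phi(np,nq)$, and take
\begin{equation*}
	g_n(t,x,y,z)=\int g(t,x,y-p,z-q)\phi_n(p,q)\diff p\diff q,
\end{equation*}
and similarly for $b_n$. The further smoothing and compactification in the $(t,x)$-variables (needed elsewhere for \ref{a3}--\ref{a4} and the compact support) is carried out separately and does not enter the estimates below; it is essential that it is not performed here, since for a coefficient growing linearly in $y$ a genuine $(t,x)$-mollification would produce an error of order $|y|$ that destroys the global supremum.

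For the $g$-part I would invoke \ref{a3}: uniform continuity of $g$ in $(y,z)$, uniformly in $(t,x)$, furnishes a modulus $\rho$ with $\rho(s)\to0$ as $s\to0$ and $|g(t,x,y,z)-g(t,x,y',z')|\le\rho\big(|(y,z)-(y',z')|\big)$ for all $(t,x)$ and all $(y,z),(y',z')$. Writing $g_n-g$ as $\int[g(t,x,y-p,z-q)-g(t,x,y,z)]\phi_n\diff p\diff q$ and using $\phi_n\ge0$, $\int\phi_n=1$, and that $\phi_n$ is supported in the ball of radius $1/n$, I would bound
\begin{equation*}
	\sup_{(y,z)}|g_n(t,x,y,z)-g(t,x,y,z)|\le\rho(1/n)\xrightarrow[n\to\infty]{}0.
\end{equation*}
Since $\rho$ is independent of $(t,x)$, this yields the asserted convergence for every $(t,x)$, uniformly over the whole of $\mathbb{R}^{l+l\times d}$.

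For the $b$-part the same decomposition applies, but \ref{a1} provides only continuity of $b(t,x,\cdot,\cdot)$ on the ball $B_R$ for each fixed $(t,x)$, with no control uniform in $(t,x)$. Hence I would argue pointwise in $(t,x)$: for fixed $(t,x)$, $b(t,x,\cdot,\cdot)$ is continuous on the compact ball $B_R$ (in fact on a slightly larger ball, so that for $n$ large the mollification stays within the region of continuity), therefore uniformly continuous there; this upgrades the fibrewise pointwise bound to $\sup_{(y,z)\in B_R}|b_n(t,x,y,z)-b(t,x,y,z)|\to0$ as $n\to\infty$.

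The main obstacle, and the reason the two statements differ, is precisely the interplay between the smoothing and the growth in $(y,z)$. The global supremum for $g$ is attainable only because \ref{a3} yields a single modulus $\rho$ valid for all $(t,x)$ and because the mollification is confined to the $(y,z)$-variables; any simultaneous smoothing in $(t,x)$ would reintroduce an $|y|$-proportional error that is not controlled on all of $\mathbb{R}^{l+l\times d}$. For $b$, where \ref{a1} gives only fibrewise continuity on $B_R$, one cannot expect uniformity in $(t,x)$, which is exactly why the conclusion there is restricted to $B_R$ and stated pointwise in $(t,x)$.
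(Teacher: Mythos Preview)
Your argument is correct and follows the same core idea as the paper: write $g_n-g$ as a convolution integral, use that the mollifier is supported in a ball of radius $1/n$, and invoke the (uniform) continuity of the coefficient in $(y,z)$ to bound the integrand by a modulus that goes to zero with $n$. The $b$-part is handled identically on the compact ball $B_R$.

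The one genuine difference is in the choice of mollification. The paper convolves in \emph{all} variables $(t,x,y,z)$ simultaneously and then appeals to the uniform continuity in $(y,z)$; you instead mollify only in $(y,z)$ and flag explicitly that smoothing in $(t,x)$ would, via the linear growth in $y$, generate an error proportional to $|y|$ that cannot be absorbed into a global supremum over $(y,z)$. Your version is therefore cleaner and sidesteps a point the paper's write-up glosses over: with a full $(t,x,y,z)$-mollification one would still need to control the $(t,x)$-increment of $g$, which \ref{a3} does not provide. The trade-off is that your $g_n$ is smooth only in $(y,z)$, so the additional smoothing in $(t,x)$ that the rest of the argument needs has to be layered on afterwards---as you note---without disturbing the present estimate. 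One small caveat you already acknowledge: for the $b$-statement you need continuity on a ball slightly larger than $B_R$ so that the convolution at points of $\partial B_R$ still samples only the region of continuity; strictly speaking \ref{a1} guarantees continuity only on $B_R$, so either assume this harmless enlargement or restrict the conclusion to $B_{R-1/n}$ and take $n$ large.
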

\begin{proof}
	Let $(\phi_n)$ be a sequence of standard mollifiers such that for each $n$, the support of $\phi_n$ is in the closure of the ball  $B_{1/n}(0)=\{ (y,z): |(y,z)|\le 1/n \}$.
	Let
	$\varepsilon>0$ be fixed.
	Since $g(t,x,\cdot,\cdot)$ is uniformly continuous,
	there is $\eta>0$ such that for $(y,z),(y',z')\in \mathbb{R}^{l+l\times d}$, satisfying  $|y -y'|+|z-z'|\le \eta$, it holds
	$|g(t,x,y,z) - g(t,x,y',z')|<\varepsilon$.
	Let $n \in \mathbb{N}$ and denote $\beta:= (y,z)$.
	Then, it holds that
	\begin{align*}
		\sup_{(y,z)}|g_n(t,x,y,z) - g(t,x,y,z) |&= \sup_{\beta = (y,z)}| \int_{[0,T]\times\mathbb{R}^{d+ l+ l\times d}}g((t,x,\beta) - \alpha)\phi_n(\alpha)\,d\alpha - g(t,x,\beta) |\\
								&\le \sup_{\beta =(y,z) } \int_{[0,T]\times B_{1/n}(0)}| g((t,x,\beta) - \alpha) - g(t,x,\beta) |\phi_n(\alpha)\,d\alpha\\
								&<\varepsilon\int_{[0,T]\times B_{1/n}(0)}\phi_n(\alpha)\,d\alpha = \varepsilon.
	\end{align*}
	This yields the result.

	The proof the local uniform convergence of $b_n$ is the same.
\end{proof}

\paragraph{Step1: Construction of an approximating sequence of solutions.}
Let $n\in \mathbb{N}$ be fixed.
According to \cite[Theorem 2.6]{Delarue}, for every $(s,x)\in [0,T]\times\mathbb{R}^d$ the FBSDE
\begin{equation}
\label{eq:fbsde n}
 	\begin{cases}
		X_t = x + \int_s^tb_n(u,X_u, Y_u, Z_u)\,\diffns u + \int_s^t\sigma \,dW_{u}\\
		Y_t = h_n(X_T) + \int_t^Tg_n(u,X_u,Y_u, Z_u)\diffns u - \int_t^TZ_u\,dW_{u}\quad t\in [s,T]
	\end{cases}
\end{equation}
admits a unique solution $(X^{s, x,n}, Y^{s, x,n}, Z^{s, x,n})\in {\cal S}^2(\mathbb{R}^d)\times {\cal S}^\infty(\mathbb{R}^l)\times {\cal H}^2(\mathbb{R}^{l\times d})$.
Denote by ${\cal L}^n$ the differential operator
\begin{equation*}
  	{\cal L}^nv := b_n(t,x, v, D_x v\sigma)D_x v + \frac 12\text{trace}(\sigma\sigma^*D_{xx} v),
\end{equation*}
  By   \cite[Theorem VII.7.1]{Ladyz-Sol-Ura68} (or see also \cite[Proposition 3.3]{MPY94}) the PDE
\begin{equation}
\label{eq:pde n}
	\begin{cases}
		\partial_tv_n(t,x) + {\cal L}^nv_n(t,x) + g_n(t,x, v_n(t,x),D_x v_n(t,x)\sigma) = 0\\
		v_n(T,x) = h(x)
	\end{cases}
\end{equation}
 admits a unique (classical) solution $v_n \in C^{1,2}([0,T]\times \mathbb{R}^d)$ that is bounded and with bounded gradient.
 Moreover, the solutions of \eqref{eq:pde n} and \eqref{eq:fbsde n} are linked through the identities (see \cite{MPY94})
\begin{equation}
\label{eq:link yn xn}
	Y^{s,x,n}_t = v_n(t, X^{s,x,n}_t) \quad \text{and}\quad Z^{s,x,n}_t = D_x v_n(t,X^{s,x,n}_t)\sigma, \quad t\in [s,T].
\end{equation}

The rest of the proof will consist in proving (strong) convergence of the above defined sequence of stochastic processes $(X^{s, x,n}, Y^{s, x,n}, Z^{s, x,n})$ and to verify that the limiting process satisfies the FBSDE with measurable drift.
Our method will make use of a priori (gradient) estimates for Sobolev solutions of parabolic quasilinear PDEs which can be found e.g. in \cite{Delarue03} or \cite{Ladyz-Sol-Ura68} and that we recall in the Appendix.
These estimates allow us to have:
\begin{lemma}
\label{lem:a priori pde}
	There are constants $C$ and $\alpha' \in (0,1)$ depending on $k_1, k_2, k_3,\sigma, d,l$ and $T$, and which do not depend on $n$ such that
	\begin{equation*}
		|v_n(t,x)|\le R \quad \text{for all}\quad (t,x) \in [0,T]\times \mathbb{R}^d
	\end{equation*}
	and for every $\delta>0$, there is a constant $C_\delta$ such that
	\begin{equation}
	\label{eq:Lipschitz vn}
	 	|D_xv_n(t,x)| \le C_\delta\quad \text{for every}\quad (t,x) \in [0,T-\delta]\times \mathbb{R}.
	 \end{equation}
	
	Moreover, if $h$ is $\alpha$-H\"older continuous, then
	\begin{equation}
	\label{eq:holder vn}
 		|v_n(t,x) - v_n(t',x')|\le C(|t-t'|^{\alpha'/2} + |x-x'|^{\alpha'})
	\end{equation}
	for every $(t,x), (t',x') \in [0,T]\times \mathbb{R}^d$ and some $\alpha'\in (0,\alpha]$.
 	If $h$ is Lipschitz continuous, then \eqref{eq:Lipschitz vn} holds with $\delta = 0$.
\end{lemma}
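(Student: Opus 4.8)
The plan is to obtain every bound in the statement by specializing the a priori estimates for quasilinear parabolic equations recalled in the Appendix, the only genuine work being to check that all the constants they produce depend on the mollified data $(b_n,g_n,h_n)$ only through the structural constants $k_1,k_2,k_3$, the ellipticity constant $\Lambda$ of \ref{a2}, and $d,l,T$ — hence not on $n$. Writing $a:=\sigma\sigma^*$ and reading \eqref{eq:pde n} componentwise, each $v_n^i$ solves the scalar equation
\begin{equation*}
 \partial_t v_n^i + \frac12\mathrm{trace}\big(a\,D_{xx}v_n^i\big) + b_n\big(t,x,v_n,D_xv_n\sigma\big)\cdot D_x v_n^i + g_n^i\big(t,x,v_n,D_xv_n\sigma\big) = 0,\qquad v_n^i(T,\cdot)=h_n^i,
\end{equation*}
whose principal part is diagonal and the same for all $i$, so that scalar parabolic theory applies to each component; by \ref{a1}, \ref{a3} and \ref{a4}, and since mollification enlarges neither the growth constants nor the Hölder/Lipschitz seminorm of $h$, the coefficients satisfy $|b_n|\le k_1(1+|v_n|+|\sigma|\,|D_xv_n|)$, $|g_n|\le k_2(1+|v_n|+|\sigma|\,|D_xv_n|)$ and $|h_n|\le k_3$ with constants free of $n$.

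First I would establish the sup bound. The shortest route is the probabilistic representation \eqref{eq:link yn xn}: since $v_n(t,x)=Y^{t,x,n}_t$ and $(Y^{t,x,n},Z^{t,x,n})\in\mathcal{S}^\infty(\mathbb{R}^l)\times\mathcal{H}^2(\mathbb{R}^{l\times d})$ solves the backward equation in \eqref{eq:fbsde n} with bounded terminal value and driver of linear growth, a Girsanov change of measure removing the $z$-dependence (admissible for each fixed $n$) followed by a Gronwall estimate on $\esssup_\omega|Y^{t,x,n}_t|$ yields a bound of the form $k_3 e^{(T-t)k_2}\le R$, uniformly in $n$; equivalently one runs the parabolic maximum principle on $|v_n|^2$. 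With $\|v_n\|_\infty\le R$ in hand, the drift coefficient $b_n(\cdot,\cdot,v_n,D_xv_n\sigma)$ has at most linear growth in $D_xv_n$ with an $n$-free constant, which is exactly the structure under which the interior gradient estimate of the Appendix (in the vein of \cite{Delarue03,Ladyz-Sol-Ura68}) applies: a Bernstein-type computation on $\sum_i|D_xv_n^i|^2$ — in which the only truly quadratic term, arising from $b_n\cdot D_xv_n^i$, is controllable precisely because $b_n$ grows at most linearly in its last argument and $\|v_n\|_\infty$ is already bounded — produces, for each $\delta>0$, a bound $|D_xv_n|\le C_\delta$ on $[0,T-\delta]\times\mathbb{R}^d$ with $C_\delta$ depending only on the structural constants and $\delta$; the restriction $t\le T-\delta$ is merely the interior parabolic smoothing in the absence of any regularity of $h$. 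If $h$ is $\alpha$-Hölder, the global De Giorgi--Nash--Moser and Schauder estimates from the Appendix, now facing a Hölder terminal datum with $n$-independent seminorm, upgrade this to \eqref{eq:holder vn} for some $\alpha'\in(0,\alpha]$; if $h$ is Lipschitz, then $|D_xh_n|\le k_3$ uniformly in $n$ and the same Bernstein argument runs all the way to $t=T$, giving \eqref{eq:Lipschitz vn} with $\delta=0$.

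The step I expect to be the main obstacle is the $n$-uniformity of the gradient bound: one must verify that the a priori estimates borrowed from \cite{Delarue03,Ladyz-Sol-Ura68} depend on the coefficients only through $k_1,k_2,k_3,\Lambda,d,l,T$ and not through the ($n$-dependent) smoothness of the mollifications. This is exactly where the quasilinear structure — the nonlinearity entering the drift only via $D_xv\sigma$, and at most linearly — is essential, and where the cited estimates do the real work. Once that is in place, passing from the componentwise scalar estimates back to the vector field $v_n$ (using that the second-order part is diagonal), and checking that the Bernstein computation closes, are routine.
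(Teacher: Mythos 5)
Your proposal is correct and follows essentially the same route as the paper: the sup bound $|v_n|\le k_3e^{Tk_2}=R$ is obtained exactly as in the paper via the representation $v_n(t,x)=Y^{t,x,n}_t$, a Girsanov transform eliminating the $z$-dependence of the driver, and Gronwall, while the interior gradient bound, the H\"older estimate under H\"older $h$, and the global Lipschitz bound under Lipschitz $h$ are all delegated to the $n$-uniform a priori estimates of the Appendix (Theorems \ref{thm:pde estimate} and \ref{thm:pde estimate Delarue}), whose constants depend only on $k_1,k_2,k_3,\Lambda,d,l,T$. The only difference is that you additionally sketch a componentwise Bernstein argument behind those estimates; this is not needed (and the componentwise reduction is delicate for systems, since the lower-order coupling involves the full gradient), but since you ultimately rely on the cited system estimates, as the paper does, nothing is lost.
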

\begin{proof}
	The boundedness of $v_n$ is well-known.
	We provide it to explicitly derive the constant $R$.
	We have
	\begin{align*}
		v_n(t,x) = Y^{t,x,n}_t& = h_n(X^{t,x,n}_T) + \int_t^T\int_0^1\partial_zg_n(u, X^{t,x,n}_u, Y^{t,x,n}_u, \lambda Z^{t,x,n}_u)\,d\lambda Z^{t,x,n}_u\,du - \int_t^T Z^{t,x,n}_u\,dW_u\\
		&\quad +\int_t^T g_n(u, X^{t,x,n}_u, Y^{t,x,n}_u,0)\,du.
	\end{align*}
	Therefore, by the Girsanov's theorem, conditions \ref{a3}-\ref{a4} and Gronwall's inequality we have
	\begin{equation*}
		|v_n(t,x)|\le k_3e^{Tk_2} =R \quad \text{for all}\quad (t,x)\in [0,T]\times \mathbb{R}^d
	\end{equation*}
	and \eqref{eq:holder vn} follows by Theorem \ref{thm:pde estimate}.
	Furthermore, since $v_n$ is a classical solution of \eqref{eq:pde n}, i.e. $v_n \in C^{1,2}([0,T]\times \mathbb{R}^d)$, it is in particular a Sobolev solution, and $v_n \in W^{1,2}_{d+1,\mathrm{loc}}([0,T]\times \mathbb{R}^d, \mathbb{R}^l)$ (see definition in Appendix).
	Moreover, if $h$ is Lipschitz continuous then by definition of $(h_n)$, it holds $|h_n(x) - h_n(x')| \le k_3|x-x'|$  for every $x, x' \in \mathbb{R}^d$ and all $n\in \mathbb{N}$.
	Therefore, the last claims follow by Theorem \ref{thm:pde estimate Delarue}.
\end{proof}

\paragraph{Step 2: Candidate solution for the forward equation.}
In this step, we show that the sequence $(X^{s,x,n})_n$ converges in the strong topology of ${\cal S}^2(\mathbb{R}^d)$.
We first show existence of a weak limit.
To ease the presentation, we omit the superscript $(s,x)$ and put
\begin{equation*}
	X^n:= X^{s,x,n}, \quad Y^n:= Y^{s,x,n}\quad \text{and}\quad Z^n:= Z^{s,x,n}.
\end{equation*}

\paragraph{Step 2a: Weak limit.}
It follows from Step 1 that the process $X^n$ satisfies the forward SDE
\begin{equation}
\label{eq:SDE b tilde}
	X^n_t = x + \int_s^tb_n(u, X^n_u, v_n(u, X^n_u), D_x v_n(u, X^n_u)\sigma)\,du + \int_s^t\sigma dW_u.
\end{equation}
\begin{lemma}
\label{lem:linear tilde b}
	Consider the function $\tilde b_n:(t,x)\mapsto b_n(t, x, v_n(t, x), D_x v_n(u, X^n_u)\sigma)$.
	Under either of the conditions \ref{b1} or \ref{b2}, the function $\tilde b_n$  is continuously differentiable and uniformly bounded, i.e. there is a constant $C\ge0$ which does not depend on $n$ such that
	\begin{equation*}
		|\tilde b_n(t,x)|\le C \quad	\text{for all}\quad (t,x) \in [0,T]\times\mathbb{R}^d.
	\end{equation*}
\end{lemma}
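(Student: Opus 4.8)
The plan is to verify the two assertions separately. The continuous differentiability of $\tilde b_n$ is a routine consequence of the chain rule: $b_n$ is $C^\infty$ by construction, and $v_n$ is a classical solution of \eqref{eq:pde n} with smooth data, hence (by standard interior parabolic regularity, and as regular as we need up to the terminal time since $h_n$ is smooth) both $v_n$ and $D_xv_n$ are continuously differentiable; thus, with the derivatives of $b_n$ evaluated at $(t,x,v_n(t,x),D_xv_n(t,x)\sigma)$,
\begin{equation*}
	D_x\tilde b_n(t,x) = D_xb_n + (D_yb_n)\,D_xv_n(t,x) + (D_zb_n)\,D_{xx}v_n(t,x)\sigma
\end{equation*}
is continuous, and similarly for $\partial_t\tilde b_n$. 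The substantive part of the lemma is the boundedness of $\tilde b_n$ by a constant that does \emph{not} depend on $n$.

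For the uniform bound I would first note that mollification does not spoil growth bounds: since the support of $\phi_n$ lies in $B_{1/n}(0)$, condition \ref{a1} yields $|b_n(t,x,y,z)|\le C_1(1+|y|+|z|)$ for all $n\ge1$ with $C_1$ depending only on $k_1$, and under \ref{b1} it yields $|b_n(t,x,y,z)|\le C_2(1+|y|)$ with $C_2$ depending only on the constant in \ref{b1}. Combining this with the a priori estimate $|v_n(t,x)|\le R$ for all $(t,x)\in[0,T]\times\mathbb{R}^d$ from Lemma \ref{lem:a priori pde}, under \ref{b1} we immediately get
\begin{equation*}
	|\tilde b_n(t,x)| = |b_n(t,x,v_n(t,x),D_xv_n(t,x)\sigma)| \le C_2(1+|v_n(t,x)|) \le C_2(1+R),
\end{equation*}
which is independent of $n$. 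Under \ref{b2}, Lemma \ref{lem:a priori pde} provides the additional bound $|D_xv_n(t,x)|\le C_0$ on all of $[0,T]\times\mathbb{R}^d$ with $C_0$ independent of $n$ (the case $\delta=0$), whence
\begin{equation*}
	|\tilde b_n(t,x)| \le C_1\big(1 + |v_n(t,x)| + |D_xv_n(t,x)|\,|\sigma|\big) \le C_1\big(1 + R + C_0|\sigma|\big),
\end{equation*}
again independent of $n$.

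The one point that genuinely needs care — the hard part — is precisely that the constants be uniform in $n$; this is not self-contained but is exactly what Lemma \ref{lem:a priori pde} (resting on the parabolic a priori estimates recalled in the Appendix) was designed to supply, together with the elementary observation above about mollifiers. It is also worth recording why the alternative hypotheses \ref{b1} and \ref{b2} are both needed: without \ref{b2}, Lemma \ref{lem:a priori pde} only bounds $D_xv_n$ on $[0,T-\delta]\times\mathbb{R}^d$ by a constant $C_\delta$ that degenerates as $\delta\to0$, so the $z$-slot of $b_n(\cdot,\cdot,v_n,D_xv_n\sigma)$ cannot be controlled up to time $T$; hypothesis \ref{b1} sidesteps this by removing the $z$-dependence from the growth bound, while \ref{b2} instead restores a global gradient bound for $v_n$.
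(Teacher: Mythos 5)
Your proof is correct and follows essentially the same route as the paper: continuous differentiability via smoothness of $b_n$ and the classical ($C^{1,2}$) regularity of $v_n$, then the uniform bound from $|v_n|\le R$ under \ref{b1} and from the additional $n$-independent gradient bound of Lemma \ref{lem:a priori pde} (the $\delta=0$ case) under \ref{b2}. The extra observation that mollification preserves the growth bounds with $n$-independent constants is a point the paper leaves implicit, and it is a worthwhile addition.
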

\begin{proof}
	That $\tilde b_n$ is continuously differentiable follows from the fact that $b_n$ is smooth and $v_n$ is twice continuously differentiable.
	By \ref{a1} and Lemma \ref{lem:a priori pde}, if condition \ref{b1} holds, then for every $(t,x)\in [0,T]\times \mathbb{R}^d$ we have
	\begin{align*}
		|\tilde b_n(t,x)|& \le k_1(1 + |v_n(t,x)|)\\
		   				 & \le k_1(1 + C ).
	\end{align*}
	When condition \ref{b2} holds, it follows by Lemma \ref{lem:a priori pde} that $D_x v_n$ is bounded.
	Thus the result follows from the linear growth of $b$, i.e. \ref{a1}.
\end{proof}
Due to Lemma \ref{lem:linear tilde b}, it follows from standard SDE estimates that the sequence $(X^n)$ satisfies
\begin{equation*}
\label{eq:S2 bound}
	\sup_nE\left[\sup_{t\in [s,T]}|X^n_t|^2 \right]<\infty.
\end{equation*}
Therefore $(X^n)_n$ admits a subsequence which converges weakly to some process $\tilde X\in {\cal S}^2(\mathbb{R}^d)$.
This subsequence will be denoted again $(X^n)$.

\paragraph{Step 2b: Strong limit.}
Since $\tilde b_n$ is Lipschitz continuous, the solution $X^n$ of the SDE \eqref{eq:SDE b tilde} is Malliavin differentiable and since $\tilde b_n$ is a smooth function with compact support,
 it follows by \cite[Lemma 3.5]{MMNPZ13} that
\begin{equation}
\label{eq:comp crit}
	E\Big[\Big\|D_{t'}^i X^{n}_r -  D_t^i X^{n}_r \Big\|^2\Big] \le {\cal C}_{d,T}( \|\tilde b_n\|_\infty)|t-t'|^\alpha
\end{equation}
and
\begin{equation}
\label{eq: bound Mall derv X}
	\sup_{0\le t\le T}E\Big[\Big\| D_tX^{n}_r \Big\|^2\Big] \le {\cal C}_{d,T}( \|\tilde b_n\|_\infty)
\end{equation}
for a strictly positive constant ${\cal C}_{d,T}(\| \tilde b_n\|_\infty)$ such that ${\cal C}_{d,T}$ is a continuous increasing function, and with $\alpha = \alpha(r)>0$.
Since the sequence $\tilde b_n$ is bounded (see Lemma \ref{eq:SDE b tilde}), it follows that the bounds on the right hand sides of \eqref{eq:comp crit} and \eqref{eq: bound Mall derv X} do not depend on $n$. 

Therefore, it follows from the relative compactness criteria from Malliavin calculus of \cite{DPMN92} that the sequence $(X^n_r)_n$ admits a subsequence $(X^{n_k}_r)_k$ converging to some $X_r$ in $L^2$.

It remains to show that the choice of the subsequence $(X^{n_k}_r)_k$ does not depend on $r$.
That is, for every $t \in [s,T]$, $(X^{n_k}_t)_k$ converges to $X_t$ in $L^2$.
In fact, we will show that the whole sequence converges.
This is done as in the proof of \cite[Proposition 2.6]{SDERough}.
Assume by contradiction that for some $t\in [s,T]$, there is a subsequence $(n_k)_{k\ge0}$ such that
\begin{equation}
\label{eq:contradiction ineq}
	\| X^{n_k}_t - X_t\|_{L^2} \ge \varepsilon.
\end{equation}
Since \eqref{eq:comp crit} is proved for arbitrary $n$, it follows again by the compactness criteria of \cite{DPMN92} that $(X^{n_k})_k$ admits a further subsequence $(X^{n_{k_1}}_t)_{k_1}$ which converges in $L^2$ to $ X_t$.
But since we showed in Step 2a that the whole sequence of process $(X^n)_n$ converges weakly to the process $\tilde X$, it follows that $(X^{n_{k_1}}_t)_{k_1}$ converges weakly to $\tilde X_t$ and therefore, by uniqueness of the limit, $\tilde X_t = X_t$.
Since by \eqref{eq:contradiction ineq} it holds
\begin{equation*}
	\| X^{n_{k_1}}_t - X_t\|_{L^2} \ge \varepsilon,
\end{equation*}
we have a contradiction.
Thus,
\begin{equation*}
	X^n_t \to X_t \quad \text{in}\quad L^2 \quad\text{for every } t\in [s,T].
\end{equation*}
\paragraph{Step 3: Candidate solution for the value process $Y$ and the control process $Z$.}
	In this part we show that the sequence $(Y^n, Z^n)$ converges strongly in ${\cal H}^2(\mathbb{R}^l)\times {\cal H}^2(\mathbb{R}^{l\times d})$ to some $(Y, Z)$.

	First recall that $(Y^n)$ is a bounded sequence.
	Thus, it admits a subsequence again denoted $(Y^n)$ which converges weakly in $\mathcal{H}^2(\mathbb{R}^l)$ to some $Y$.
	We will show that the convergence is actually strong, provided that we restrict ourselves to a small enough time interval.
	Let $\delta\in (0,T)$ be fixed.
	By Lemma \ref{lem:a priori pde}, the sequence of functions $(v_n)$ is bounded and equicontinuous on $[0,T - \delta]\times \mathbb{R}^d$.
	Thus, by the Arzela-Ascoli theorem, there is a subsequence again denoted $(v_n)$ which converges locally uniformly to a continuous function $v^\delta$.
	Since by Lemma \ref{lem:a priori pde} the functions $v_n$ are H\"older continuous with a coefficient that does not depend on $n$ and with common H\"older exponents $\alpha'$ (in $x$) and $\alpha'/2$ (in $t$), we have
	\begin{align}
	\nonumber
		E\big[| v_n(t, X^n_t) - v^\delta(t, X_t)|^2\big] &\le E\big[| v_n(t, X^n_t) - v_n(t, X_t) |^2\big] + E\big[|v_n(t, X_t) - v^\delta(t,X_t)|^2\big]\\
		\label{eq:conv v}
		&\le CE\big[|X_t - X^n_t|^{2\alpha'}\big] + E\big[|v_n(t,X_t) - v(t,X_t)|^2\big]\to 0.
	\end{align}
	Therefore, $Y^n_t = v_n(t, X_t^n)$ converges to $v^\delta(t, X_t)$ in $L^2$ for each $t \in [0,T - \delta]$.
	It then follows by uniqueness of the limit that
	\begin{equation}
	\label{eq:y=v}
		Y_t := v^\delta(t,X_t) \quad \text{for all}\quad t\in [0,T - \delta].
	\end{equation}
	It then follows by Lebesgue dominated convergence (in view of Lemma \ref{lem:a priori pde}) that $(Y^n)$ converges to $Y$ in $\mathcal{H}^2(\mathbb{R}^l)$ restricted to $[0,T - \delta]$, i.e.
	\begin{equation}
	\label{eq:conv strong Y}
		\lim_{n\to \infty}E\Big[\int_0^{T - \delta}|Y^n_t - Y_t|^2\,dt \Big]=0.
	\end{equation}
	The equation \eqref{eq:y=v} further shows that $v^\delta$ does not depend on $\delta$.
	Thus, we will henceforth right
	$$
		Y_t = v(t,X_t)\quad \text{for all} \quad t\in [0,T- \delta]\quad \text{and}\quad \text{for all}\quad \delta>0.
	$$


	We now turn to the construction of the candidate control process $Z$.
	We want to justify that under both conditions \ref{b1} and \ref{b2} the sequence $b_n$ can be taken uniformly bounded.
	In fact, if the function $b$ satisfies \ref{b1}, and since $(Y^n)$ is uniformly bounded (this follows by the representation $v_n(t,X^n_t) = Y^{n}_t$ and Lemma \ref{lem:a priori pde}) it follows by uniqueness of solution that $(X^n, Y^n, Z^n)$ also solves the FBSDE \eqref{eq:fbsde n} with $b_n$ replaced by its restriction on $[0,T]\times \mathbb{R}^d\times B_R(0)\times \mathbb{R}^{l\times d}$.
	Similarly, if condition \ref{b2} holds, then $(Y^n)$ and $(Z^n)$ are bounded, and by uniqueness, $(X^n, Y^n, Z^n)$ also solves the FBSDE \eqref{eq:fbsde n} with $b_n$ replaced by its restriction on $[0,T]\times \mathbb{R}^d\times B_R(0)\times B_R(0)$.
	In particular, we can assume without loss of generality that $b_n$ is uniformly bounded, i.e. $|b_n(t,x,y,z)|\le C$ for all $n, t, x,y,z$ and for some constant $C>0$.
	Therefore, it follows by Theorem \ref{thm:pde estimate} that for every $\delta>0$ and $\kappa \in (0,1)$ there is a constant $C_{\delta,\kappa}$ independent on the derivatives of the coefficient (which in particular does not depend on $n$) such that for every $t,t' \in [0,T - \delta]$ and $x,x' \in \mathbb{R}^d$ it holds that
\begin{equation*}
	| D_x v_n(t,x) - D_x v_n(t',x') | \le C_{\delta,\kappa}(|x - x'|^{\kappa} + |t - t'|^{\kappa/2}).
\end{equation*}
Now, let $(\delta^k)$ be a strictly decreasing sequence converging to $0$.
By Arzela-Ascoli theorem, there is a subsequence $w_{n,k}:= {D_x v_n}_{|_{[0, T - \delta^k]\times \mathbb{R}^d}}$ which converges locally uniformly to some function $w_k$ on $[0,T - \delta^k]\times \mathbb{R}^k$.
Since $Z^n_t = D_x v_n(t, X^n_t)\sigma$ for all $t \in [0,T]$ (recall \eqref{eq:link yn xn}) we then have $Z^{n_k}_t = w_{n,k}(t, X^n_t)\sigma$ for every $t\in [0,T - \delta^k]$ and every $k \in \mathbb{N}$, for some subsequence of $Z^n$.
And arguing as in Equation \ref{eq:conv v}, we have
\begin{equation*}
	Z^{n_k}_t = D_x w_{n,k}(t, X^{n_k}_t)\sigma \to  w_k(t, X_t)\sigma \quad \text{in}\quad L^2\quad\text{for every}\quad t \in [0,T - \delta^k].
\end{equation*}
Assumption \ref{a2} and uniqueness of the limit show that $w_k = w_{k+1}$ on $[0,T - \delta^k]$ for every $k$.
Thus, the function
\begin{equation*}
	w(t,x) := w_1(t,x)1_{[0, T - \delta^1]}(t) + \sum_{k = 1}^\infty w_k(t,x)1_{[T - \delta^{k}, T - \delta^{k+1}]}(t)
\end{equation*}
is a well-defined Borel measurable function
and putting
\begin{equation}
\label{eq: z = w}
	Z_t := w(t, X_t)\sigma,
\end{equation}
we have by Lebesgue dominated convergence that $Z^{n_k}\to Z$ in $\mathcal{H}^2(\mathbb{R}^{l\times d})$ restricted to the interval $[0,T - \delta^k]$.
In particular, it follows by It\^o isometry that
\begin{equation}
\label{eq:conv stoc in}
	\int_0^{T - \delta^k}Z^{n_k}_t\,dW_t \to \int_0^{T - \delta^k}Z_t\,dW_t \quad \text{in}\quad L^2\quad \text{for every } k.
\end{equation}

\paragraph{Step 4: Verification.}
The goal of this step is to show that the triple of processes $(X, Y, Z)$ constructed above satisfies the coupled system \eqref{eq:fbsde}.
This part of the proof will be further split into 2 steps:
We first show that $(X,Y,Z)$ satisfies the forward equation.
This step uses the representations $Y_t= v(t,X_t)$ and $Z_t = w(t,X_t)\sigma$ in a crucial way.
In fact, these representation allow to obtain a solution $\bar X$ of a decoupled SDE with measurable drift that we can then show to coincide with the candidate solution $X$ constructed above.
In the last part we show that $(X,Y,Z)$ satisfies the backward equation.

\paragraph{Step 4a: The forward equation.}
Using either of the conditions \ref{b1} or \ref{b2}, we can show as above that the function  $x\mapsto b(t, x, v(t, x),w(t,x)\sigma)$ is bounded.
Therefore, \cite{MMNPZ13} gives existence of a unique solution $\bar X$ to the SDE
\begin{equation*}
	\bar X_t = x + \int_s^tb(u, \bar X_u, v(u, \bar X_u), w(u, \bar X_u)\sigma)\,du +\int_s^t\sigma\,dW_u.
\end{equation*}
Hence, in view of \eqref{eq:y=v} and \eqref{eq: z = w}, it remains to show that $\bar X_t = X_t$ $P$-a.s. for every $t\in [s,T]$ to conclude that the forward SDE is satisfied, that is, that
\begin{equation*}
	 X_t = x + \int_s^tb(u,  X_u, Y_u,Z_u)\,du +\int_s^t\sigma\,dW_u.
\end{equation*}
To that end, (by uniqueness of the limit) it suffices to show that for each $t \in [s,T]$ the sequence $(X^n_t)_n$ converges to $\bar X_t$ in the weak topology of $L^2(P)$.
Since the set
\begin{equation*}
 	\left\{ {\cal E}(\dot\varphi\cdot W)_{0,T}:\,\, \varphi \in C_b^1([0,T],\mathbb{R}^d) \right\}
 \end{equation*}
 is dense in $L^2(P)$, in order to get weak convergence it is enough to show that $(X^n_t{\cal E}(\dot\varphi_u\cdot W)_{0,T})_n$ converges to $X_t{\cal E}(\dot\varphi_u\cdot W)_{0,T}$ in expectation, for every $\varphi \in C_b^1([0,T],\mathbb{R}^d)$.
 Hereby $C_b^1([0,T],\mathbb{R}^d)$ denotes the space of bounded continuously differentiable functions on $[0,T]$ with values in $\mathbb{R}^d$, and $\dot\varphi$ is the derivative of $\varphi$.
Put $\tilde X^n_t(\omega):= X^n_t(\omega+ \varphi)$ and $\tilde X_t(\omega):=\bar X_t(\omega+ \varphi)$.
It follows by the Cameron-Martin theorem, see e.g. \cite{UsZa1} that $\tilde X^n$ satisfies the SDE
\begin{equation*}
	d\tilde X^n_t = \left(b_n(t, \tilde X^n_t, v_n(t, \tilde X^n_t), D_x v_n(t, \tilde X^n_t)\sigma) + \sigma \dot\varphi_t\right)\,dt +\sigma dW_t.
\end{equation*}
In fact, for every $H \in L^2(P; {\cal F}_t)$, it holds
\begin{align*}
	E\left[\tilde X^n_tH \right] & = E\left[X^n_tH(\omega - \varphi){\cal E}(\dot\varphi_u\cdot W)_{s,T} \right]\\
				&=E\left[\left(x + \int_s^tb_n(u, X^n_u, v_n(u, X^n_u),D_x v_n(u, X_u^n)\sigma)\,du + \sigma W_t \right)H(\omega-\varphi){\cal E}(\dot\varphi_u\cdot W)_{s,T} \right]\\
				&=E\left[\left(x + \int_s^tb_n(u, X^n_u, v_n(u, X^n_u),D_x v_n(u, X^n_u)\sigma)(\omega+ \varphi)\,du + \sigma W_t(\omega+\varphi) \right)H \right]\\
				&=E\left[\left(x + \int_s^tb_n(u, \tilde X^n_u, v_n(u, \tilde X^n_u),D_x v_n(u, \tilde X^n_u)\sigma) + \sigma \dot\varphi_u\,du + \sigma W_t(\omega) \right)H \right],
\end{align*}
where the latter equality follows by the fact that $W_t(\omega + \varphi) = W_t(\omega) +\varphi_t$, since $W$  is the canonical process.
This proves the claim.
That $\tilde X$ satisfies
\begin{equation*}
	d\tilde X_t = \left(b(t, \tilde X_t, v(t, \tilde X_t), w(t,\tilde X_t)\sigma) + \sigma \dot\varphi_t\right)\,dt +\sigma dW_t
\end{equation*}
is proved similarly.
Now put
\begin{equation*}
	u_n(t, x):= \sigma^*(\sigma\sigma^*)^{-1}b_n(t, x, v_n(t, x),D_x v_n(t,x)\sigma)\quad\text{and}\quad u:= \sigma^*(\sigma\sigma^*)^{-1}b(t, x, v(t, x),w(t,x)\sigma).
\end{equation*}
Recall that the law of $\tilde X^n_t$ under the probability measure $Q^n$ with density ${\cal E}( u_n(r, \tilde X^n_r) + \dot\varphi_r\cdot W)_{0,T}$ coincides with the law of $x+ \sigma W_t$ under $P$.
Similarly, the law of $\tilde X_t$ under the probability measure $Q$ with density ${\cal E}( u(r, \tilde X_r) + \dot\varphi_r\cdot W)_{0,T}$ coincides with the law of $x+ \sigma W_t$ under $P$.
Thus, it follows by Girsanov's theorem and the inequality $|e^a - e^b|\le |e^a + e^b||a-b|$ that
\begin{align}\label{eqweaklim3}
	&E\left[X^n_t{\cal E}(\dot\varphi_u\cdot W)_{0,T} \right] - E\left[\tilde X_t{\cal E}(\dot\varphi_u\cdot W)_{0,T} \right]\\
	&= E\left[(x + \sigma W_t)\left({\cal E}\left( \{u_n(r, x+ \sigma W_r )+\dot\varphi_r\}\cdot W \right)_{0,T}- {\cal E}(\{ u(r, x+ \sigma W_r)+\varphi_r\}\cdot W)_{0,T} \right) \right]\notag\\
 		  &\leq C E\left[|x+\sigma\cdot W_{t}|^2\right]^{\frac{1}{2}}\\
 		 &\quad\times E\left[\left(\mathcal{E}\left(\Big\{ u_n\left(r,x+\sigma W_r\right)+\dot\varphi_r\Big\}\cdot W\right)_{0,T}+\mathcal{E}\left(\Big\{ u\left(r,x+\sigma  W_r\right)_{0,T}+\dot\varphi_r\Big\}\cdot W\right)_{0,T}\right)^4\right]^{\frac{1}{4}}\notag\\
 		&\quad \times\left\{E\left[ \left(\int_0^T\right.\left( u_n\left(r,x+\sigma W_r\right)-  u\left(r,x+\sigma  W_r\right)\right)dW_r\right)^4\right]\notag\\
 		&\quad +E\left[\left(\left.\int_0^T\left\{\| u_n(r,x+\sigma W_r)+\dot\varphi_r\|^2	-\| u(r,x+\sigma W_r)+\dot\varphi_r\|^2\right\}\diffns r\right)^4\right] \right\}^{\frac{1}{4}}\notag\\
 		&=I_1\times I_{2,n}\times (I_{3,n}+I_{4,n})^{1/4}.
 	\end{align}
 	That $I_1$ is finite is clear, by properties of Brownian motion.
 	Since $b_n$ is bounded, so is $u_n$.
 	Thus, by boundedness of $\dot\varphi$, it holds that $\sup_nI_{2,n}$ is finite.

 	Now if we show that the sequence $(u_n)$ converges to $u$ pointwise, it would follow by Lebesgue's dominated convergence theorem, to get that $I_{3,n}$ and $I_{4,n}$ converge to $0$ as $n$ goes to infinity, hence concluding the proof.
 	In fact, there is $R>0$ such that $|v_n|\le R$ and for every $t \in [0,T)$, there is $R'$ such that\footnote{Under the condition \ref{b1} and when $t=T$, the sequence $(D_x v_n)$ might not be bounded and $(u_n)$ does not necessarily converge to $u$ but convergence for almost every $t$ is enough.} $|D_x v_n|\le R'$.
 	Thus, by  definition of $u_n$ and $u$, and $(t,x)\in [0,T)\times \mathbb{R}^d$ we have
 	\begin{align*}
 	 	|u_n(t,x) - u(t,x)| &\le C|b_n(t,x, v_n(t,x),D_x v_n(t,x)\sigma) - b(t,x, v(t,x), w(t,x)\sigma)|\\
 	 	& \le C|b_n(\cdot, v_n,D_x v_n\sigma) - b(\cdot, v_n,D_x v_n\sigma)|(t,x) + C|b(\cdot, v_n,D_x v_n\sigma) - b(\cdot, v, w\sigma)| (t,x) \\
 	 	   &\le C\sup_{y \in B_R(0), z\in B_{R'}(0) }|b_n(t,x,y,z ) - b(t,x, y,z)|\\
 	 	   &\quad + C|b(t,x, v_n(t,x),v_n(t,x)\sigma) - b(t,x, v(t,x),w(t,x)\sigma)|.
 	 \end{align*}
 	 The first term converges to zero since $b_n$ converges to $b$ locally uniformly (in $(y,z)$); and the second term converges to zero because $v_n$ and $D_x v_n\sigma$ converge to $v$ and $w\sigma$ respectively, and the function $b(t,x,\cdot, \cdot)$ is continuous on the ball $B_R(0)\times B_{R'}(0)$.

	\paragraph{Step 4b: The backward equation.}
	In this final step of the proof we show that the process $(X,Y,Z)$ satisfies the backward equation.
	The arguments is very similar to those of the Step 4a and also rely on the existence of the decoupling fields $v$ $w$ and Girsanov's transform.

	By Steps 2 and 3 we know that $(X^n_t)$ converges to $X_t$ in $L^2$, $(Y^{n_k},Z^{n_k})$ converges to $(Y,Z)$ in $\mathcal{H}^2(\mathbb{R}^l)\times \mathcal{H}^2(\mathbb{R}^{l\times d})$ (restricted to the interval $[0,T - \delta^k]$).
	Let $k$ be fixed and let $X^{n_k}$ be a subsequence corresponding to $(Y^{n_k},Z^{n_k})$.
	For every $n,k$ we have
	\begin{align}
	\label{eq:bsde for nk}
		Y_t^{n_k} =Y^{n_k}_{T - \delta^k} + \int_t^{T - \delta^k}g_{n_k}(u,X^{n_k}_u, Y^{n_k}_u, Z^{n_k}_u)\,du - \int_t^{T - \delta^k}Z^{n_k}_u\,dW_u.
	\end{align}
	Now, we would like to take first the limit in $n_k$ and then limit in $k$ on both sides.
By Step 3, the sequences of random variables $Y^{n_k}_t$, $Y^{n_k}_{T-\delta^k}$ and $\int_t^{T-\delta^k}Z^{n_k}_u\,dW_u$ respectively converge to $Y_t$, $Y_{T-\delta^k}$ and $\int_t^{T-\delta^k}Z_u\,dW_u$ in $L^2$.
Thus, it suffices to show that $\int_t^{T-\delta^k}g_{n_k}(X^{n_k}_u, Y^{n_k}_u, Z^{n_k}_u)\,du$ converges to $\int_t^{T-\delta^k}g(u, X_u, Y_u, Z_u)\,du$ in $L^2$.
To this end, define
\begin{equation*}
 	\tilde g_{n_k}(t,x):= g_{n_k}(t,x, v_{n_k}(t,x), D_xv_{n_k}(t, x)\sigma) \quad \text{and}\quad \tilde g(t,x):= g(t,x, v(t,x), w(t, x)\sigma).
 \end{equation*}
Observe that $\tilde g_{n_k}$ converges to $g$ pointwise, for every $t \in [0,T-\delta^k]$.
In fact,
\begin{align*}
	|\tilde g_n(t,x) - \tilde g(t,x)| &= | g_{n}(t,x, v_{n}(t,x), D_xv_{n}(t, x)\sigma) - g(t,x, v_{n}(t,x), D_xv_{n}(t, x)\sigma) |\\
	&\quad + | g(t,x, v_{n}(t,x), D_xv_{n}(t, x)\sigma) - g(t,x, v(t,x), w(t, x)\sigma) | \\
	&\le \sup_{y,z}|g_n(t,x,y,z) - g(t,x,y,z)| \\
	&\quad + | g(t,x, v_{n}(t,x), D_xv_{n}(t, x)\sigma) - g(t,x, v(t,x), w(t, x)\sigma) | \to 0
\end{align*}
where we used Lemma \ref{lem:molli conver} and continuity of $g$ in $(y,z)$ .
Recall the representations $Y^{n_k}_u = v_{n_k}(u, X^{n_k}_u)$, $Z^{n_k}_u = D_xv^{n_k}(u, X^{n_k}_u)\sigma$ and $Y_u= v(u, X_u)$, $Z_u = w(u, X_u)\sigma$.
For any $m \in \mathbb{N}$, we have
\begin{align*}
	&E\Big[\int_t^{T-\delta^k} |g_{n_k}(u,X^{n_k}_u, Y^{n_k}_u, Z^{n_k}_u) - g(u, X_u, Y_u, Z_u)|^2\,du\Big]  = E\Big[\int_t^{T-\delta^k}|\tilde g_{n_k}(u,X^{n_k}_u) - \tilde g(u,X_u)|^2\,du\Big] \\
	&\le E\Big[\int_t^{T-\delta^k}|\tilde g_{n_k}(u, X^{n_k}_u) - \tilde g(u, X^{n_k}_u)|^2 + |\tilde g(u, X^{n_k}_u) - \tilde g_{m}(u,X^{n_k}_u) |^2 + |\tilde g_{m}(u,X^{n_k}_u)  - \tilde g(u, X_u)|^2\,du \Big]\\
	&\le E\Bigg[{\cal E}\big(\tilde b_{n_k}(u,x + \sigma W_u )\cdot W \big)_{0,T}\Big\{\int_t^{T-\delta^k}|\tilde g_{n_k}(u, x + \sigma W_u) - \tilde g(u, x + \sigma W_u)|^2\\
	&\quad + |\tilde g(u, x + \sigma W_u) - \tilde g_{m}(u,x + \sigma W_u) |^2\,du\Big\}\Bigg] + E\Big[ \int_t^{T-\delta^k} |\tilde g_{m}(u,X^{n_k}_u)  - \tilde g(u, X_u)|^2\,du \Big],
\end{align*}
where the last inequality follows by Girsanov's theorem and where we used the notation
\begin{equation}
\label{eq:tilde b}
	\tilde b_{n_k}(t,x):= b(t, x, v_{n_k}(t,x) , D_xv_{n_k}(t, x)\sigma).
\end{equation}
Therefore, using H\"older's inequality the above estimation continues as
\begin{align*}
	&E\Big[\int_t^{T-\delta^k} |g_{n_k}(u,X^{n_k}_u, Y^{n_k}_u, Z^{n_k}_u) - g(u, X_u, Y_u, Z_u)|^2\,du\Big]\\
	&\le CE\Big[{\cal E}\big(\tilde b_{n_k}(u,x + \sigma W_u )\cdot W \big)_{0,T}^2\Big]^{1/2}E\Big[\int_t^{T-\delta^k}|\tilde g_{n_k}(u, x + \sigma W_u) - \tilde g(u, x + \sigma W_u)|^4\\
	&\quad + |\tilde g(u, x + \sigma W_u) - \tilde g_{m}(u,x + \sigma W_u) |^4\,du\Big]^{1/2} + E\Big[ \int_t^{T-\delta^k} |\tilde g_{m}(u,X^{n_k}_u)  - \tilde g(u, X_u)|^2\,du \Big].
\end{align*}
Since $\tilde b_{n_k}$ is bounded, the quantity $E\Big[{\cal E}\big(\tilde b_{n_k}(u,x + \sigma W_u )\cdot W \big)_{0,T}^2\Big]$ is bounded.
Thus, letting $m$ fixed and taking the limit as $n_k$ goes to infinity we obtain by Lebesgue dominated convergence that
\begin{align*}
	&\lim_{n_k\to\infty}E\Big[\int_t^{T-\delta^k} |g_{n_k}(u,X^{n_k}_u, Y^{n_k}_u, Z^{n_k}_u) - g(u, X_u, Y_u, Z_u)|^2\,du\Big]\\
	&\le CE\Big[\int_t^{T-\delta^k}|\tilde g(u, x + \sigma W_u) - \tilde g_{m}(u,x + \sigma W_u) |^4\,du \Big] + E\Big[ \int_t^{T-\delta^k} |\tilde g_{m}(u,X_u)  - \tilde g(u, X_u)|^2\,du \Big].
\end{align*}
Letting $m$ go to infinity it follows again by dominated convergence that the right hand side above goes to zero.
Thus
\begin{equation*}
	\int_t^{T-\delta^k}g_{n_k}(u,X^{n_k}_u, Y^{n_k}_u, Z^{n_k}_u) \to \int_t^{T-\delta^k}g(u, X_u, Y_u, Z_u)\,du \quad \text{in}\quad L^2.
\end{equation*}
Hence, $(X,Y,Z)$ satisfies
\begin{equation*}
	Y_t = Y_{T - \delta^k} +\int_t^{T - \delta^k}g(u,X_u, Y_u, Z_u)\,du - \int_t^{T - \delta^k}Z_u\,dW_u\quad P\text{-a.s. for every } k.
\end{equation*}
Next, we take the limit as $k$ goes to infinity.
Since $\delta^k\downarrow 0$, we only need to justify that $(Y_{T - \delta^k})$ converges to $Y_T$ $P$-a.s.
Indeed, since $(Y^n_T)$ converges to $Y_T$ in the weak topology of $L^2$, there exists a subsequence $(\tilde{Y}^n_T)$ in the asymptotic convex hull of $(Y^n_T)$ such that $(\tilde{Y}^n_T)$ converges to $Y_T$ in $L^2$. Moreover, $\tilde{Y}^n_T$ satisfies
\begin{equation*}
\tilde{Y}^n_T=\tilde{Y}^n_t-\int_t^TG^n_udu+\int_t^T\tilde{Z}^n_udW_u
\end{equation*}
where $(\tilde{Y}^n_t, G^n_u, \tilde{Z}^n_u)$ is the convex combination of $(Y^n_t, g_n(u,X^n_u,Y^n_u,Z^n_u), Z^n_u)$ corresponding to $\tilde{Y}^n_T$. If the condition \ref{b1} is satisfied, then $|g_n(u, X^n_u, Y^n_u, Z^n_u)|$ is dominated by $|Y^n_u|$ which is bounded, and if the condition \ref{b2} is satisfied, then $Z^n_u = D_x v^n(u, X^n_u)\sigma$ is bounded (by Lemma \ref{lem:a priori pde}), thus it follows by \ref{a3} that $|g_n(u, X^n_u, Y^n_u, Z^n_u)|$ is bounded. Hence, $G^n_u$ is bounded under both conditions. Therefore it follows by triangular inequality that for every $k,n \in \mathbb{N}$ it holds that
\begin{align*}
	\big| Y_{T - \delta^k} &- E[Y_T \mid \mathcal{F}_{T - \delta^k}]\big|\\
	&\le C\big( | Y_{T - \delta^k} - \tilde{Y}^n_{T - \delta^k}| + | E[\tilde{Y}^n_{T - \delta^k} - \tilde{Y}^n_T\mid \mathcal{F}_{T - \delta^k}]| + E[ |\tilde{Y}^n_T- Y_T| \mid \mathcal{F}_{T - \delta^k}]  \big)\\
	&\le C\Big( | Y_{T - \delta^k} - \tilde{Y}^n_{T - \delta^k}| +  E\Big[\int_{T-\delta^k}^T|G^n_u|\,du \mid \mathcal{F}_{T - \delta^k} \Big] + E[ |\tilde{Y}^n_T- Y_T| \mid \mathcal{F}_{T - \delta^k}] \Big)\\
&\le C(|Y_{T - \delta^k} - \tilde{Y}^n_{T - \delta^k}| + \delta^k + E[ |\tilde{Y}^n_T- Y_T| \mid \mathcal{F}_{T - \delta^k}]).
\end{align*}
for some constant $C>0$. Since $(\tilde{Y}^n_T)$ converges to $Y_T$ in $L^2$, $(Y^n_{T-\delta^k})$ converges to $Y_{T-\delta^k}$ in $L^2$ and $\tilde{Y}^n_{T-\delta^k}$ is the convex combination of $Y^n_{T-\delta^k}$, taking the limit first in $n$ and then in $k$ as they go to infinity shows that $| Y_{T - \delta^k} - E[Y_T \mid \mathcal{F}_{T - \delta^k}] |\to 0$ $P$-a.s.
On the other hand, in our filtration every martingale has a continuous version.
Thus, $E[Y_T\mid \mathcal{F}_{T - \delta^k}] \to Y_T$ $P$-a.s. as $k$ goes to infinity.
We can therefore conclude that $Y_{T - \delta^k} \to Y_T$ $P$-a.s. when $k$ goes to infinity, which yields
\begin{equation*}
	Y_t = Y_T + \int_t^{T }g(u,X_u, Y_u, Z_u)\,du - \int_t^{T}Z_u\,dW_u.
\end{equation*}
It finally remains to show that $Y_T = h(X_T)$.
Since $(Y^n_T)$ converges to $Y_T$ in the weak topology of $L^2$ (see the beginning of Step 3) if we show that $(Y^n_T)$ converges to $h(X_T)$ in $L^2$ then we can conclude that $Y_T = h(X_T)$.
If \ref{b2} holds, this is clear.
In case \ref{b1} holds, this is done using again a Girsanov change of measure and boundedness of $\tilde b_n$ (recall definition given in \eqref{eq:tilde b}).
In fact, for every $m \in \mathbb{N}$ it holds that
\begin{align*}
	&E[|h_n(X^n_T) - h(X_T)|^2]\\
	 &\le C\Big( E[|h_n(X^n_T) - h(X_T^n) |^2] + E[|h(X^n_T) - h_m(X^n_T) |^2] + E[|h_m(X^n_T) - h(X_T)|^2] \Big)\\
		&\le C\Big(E\Big[{\cal E}\big(\tilde b_n(t, x+\sigma W_t)\cdot W \big)_{0,T}\Big\{ |h(x+\sigma W_T) - h_m(x+\sigma W_T) |^2 + |h_m(x+\sigma W_T) - h(x+\sigma W_T)|^2\Big\} \Big]\\
		&\quad +  E[|h_m(X^n_T) - h(X_T)|^2]\Big)\\
		&\le C\Big(E\Big[{\cal E}\big(\tilde b_n(t, x+\sigma W_t)\cdot W \big)_{0,T}^2\Big]^{1/2}\\
		&\quad\times E\Big[ |h(x+\sigma W_T) - h_m(x+\sigma W_T) |^4 +|h_m(x+\sigma W_T) - h(x+\sigma W_T)|^4\Big]^{1/2} \\
		&\quad +  E[|h_m(X^n_T) - h(X_T)|^2]\Big).
\end{align*}
Since $\tilde b_n$ is bounded, the first term on the right hand side above is bounded.
Thus, fix $m$ then take the limit $n \to \infty$ and then the limit $m\to \infty$ to get by dominated convergence
\begin{equation*}
	E[|h_n(X^n_T) - h(X_T)|^2]\to 0.
\end{equation*}

 	This concludes the proof.
\hfill$\Box$

\subsection{Regularity of solutions}
\label{sec:regu fbsde}
In this section we investigate regularity properties of the solution $(X,Y,Z)$ of the FBSDE \eqref{eq:fbsde}.
We will consider two types of regularity properties.
We start by proving Malliavin differentiability of the solution.
This follows as a direct consequence of the method of proof of the existence result.
Then, we continue to consider smoothness of the solution as function of the initial position of the forward process.
We will show that for each $s\in [0,T]$ and $t\ge s$, the mapping $x\mapsto (X^{s,x}_t, Y^{s,x}_t)$ belongs to a weighted Sobolev space for almost every path.
The last result will be central for applications to PDEs.

\subsubsection{Malliavin differentiability}
 Let $0\le s\le t\le T$ and $x \in \mathbb{R}^d$.
 Let $(X, Y ,Z)$ be the solution of FBSDE \eqref{eq:fbsde} given by Theorem \ref{thm:exists fbsde}. The next result gives the Malliavin differentiability of $(X,Y,Z)$. We additionally consider the following conditions:
 \begin{enumerate}[label = (\textsc{A5}), leftmargin = 30pt]
	\item
	The function $g(t,x,y,z) = g(t,x,y)$ does not depend on $z$ and is Lipschitz continuous in $(x,y)$.
	 \label{a5}
\end{enumerate}
 \begin{enumerate}[label = (\textsc{A6}), leftmargin = 30pt]
	\item The function $g(t,x,y,z) = g(t,y,z)$ does not depend on $x$ and is continuously differentiable in $(y,z)$ and is Lipschitz continuous in $(y,z)$.
	 \label{a6}
\end{enumerate}
\begin{proposition}
\label{pro:malliavin flow}
	Assume that the conditions \ref{a1}-\ref{a4} are satisfied.
	\begin{itemize}
		\item[(i)] If \ref{b1} is satisfied, then $X_t$ is Malliavin differentiable for all $t\in [0,T]$	and for every $\delta >0$, $Y_t$ is Malliavin differentiable for all $t\in[0,T - \delta]$.
		\item[(ii)] If \ref{b2} is satisfied, then $(X_t,Y_t)$ is Malliavin differentiable for all $t\in[0,T]$.
		\item[(iii)] If \ref{b2} and either of the conditions \ref{a5} or \ref{a6} holds, then $(X_t, Y_t,Z_t)$ is Malliavin differentiable for all $t\in[0,T]$.
	\end{itemize}
\end{proposition}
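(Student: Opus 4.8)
The plan is to build on the construction in the proof of Theorem~\ref{thm:exists fbsde}: the solution $(X,Y,Z)$ is obtained there as a limit of the solutions $(X^n,Y^n,Z^n)$ of the regularised FBSDEs~\eqref{eq:fbsde n}, and Malliavin differentiability is stable under such limits. Precisely, I would use the closedness of the Malliavin derivative operator: if $F^n\to F$ in $L^2$ and $\sup_nE\big[\|DF^n\|_{L^2([0,T])}^2\big]<\infty$, then $F\in\mathbb{D}^{1,2}$ and, along a subsequence, $DF^n\rightharpoonup DF$ weakly (see e.g.~\cite{UsZa1}); applied with $L^2(\Omega)$ replaced by the Hilbert space $L^2(\Omega\times[0,T])$ this also handles the process $Z$. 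The whole proof thus reduces to establishing uniform-in-$n$ bounds in $\mathbb{D}^{1,2}$ for $X^n_t$, $Y^n_t$ and $Z^n_t$, and then invoking the $L^2$-convergences obtained in Steps~2 and~3 of the proof of Theorem~\ref{thm:exists fbsde}.

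For the forward process, which is needed in all three items, recall that under~\ref{b1} or~\ref{b2} the drift $\tilde b_n$ of~\eqref{eq:SDE b tilde} is bounded uniformly in $n$ by Lemma~\ref{lem:linear tilde b}; hence~\eqref{eq: bound Mall derv X} is uniform in $n$, giving $\sup_n\sup_{s,u}E\big[\|D_sX^n_u\|^2\big]<\infty$, and since $X^n_t\to X_t$ in $L^2$ we conclude $X_t\in\mathbb{D}^{1,2}$ for all $t\in[0,T]$. For the value process I would combine this with the chain rule and the representation~\eqref{eq:link yn xn}: since $v_n\in C^{1,2}$ and, by~\eqref{eq:Lipschitz vn}, $\|D_xv_n\|\le C_\delta$ on $[0,T-\delta]\times\mathbb{R}^d$ (on all of $[0,T]\times\mathbb{R}^d$, with $\delta=0$, when~\ref{b2} holds), we have $D_sY^n_t=D_xv_n(t,X^n_t)\,D_sX^n_t$, whence $E\big[\|DY^n_t\|_{L^2}^2\big]\le C_\delta^2\,E\big[\|DX^n_t\|_{L^2}^2\big]$ uniformly in $n$ for $t\le T-\delta$ (resp.\ for all $t\le T$). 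Together with $Y^n_t\to Y_t$ in $L^2$ this gives items~(i) and~(ii).

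Item~(iii) is the delicate one and the main obstacle. One cannot simply differentiate the identity $Z^n_t=D_xv_n(t,X^n_t)\sigma$, because the Hessians $D_{xx}v_n$ are \emph{not} bounded uniformly in $n$ (the drifts $b_n$ are not equi-H\"older, so Schauder-type estimates degenerate as $n\to\infty$). The way around this is to bypass $v_n$ and work with the linearised backward equation directly: $(X^n,Y^n,Z^n)$ solves a Lipschitz FBSDE with smooth coefficients, so $(Y^n,Z^n)\in\mathbb{D}^{1,2}$ and, for each fixed $s$, $(D_sY^n,D_sZ^n)$ solves
\begin{equation*}
	D_sY^n_t=\nabla h_n(X^n_T)\,D_sX^n_T+\int_t^T\big(\partial_xg_n\,D_sX^n_u+\partial_yg_n\,D_sY^n_u+\partial_zg_n\,D_sZ^n_u\big)\,du-\int_t^T D_sZ^n_u\,dW_u,\qquad t\ge s.
\end{equation*}
The point of imposing~\ref{b2} together with~\ref{a5} or~\ref{a6} is precisely that then $h$ and $g$ are Lipschitz in the relevant variables, so that the mollifications may be chosen with $\|\nabla h_n\|,\|\partial_xg_n\|,\|\partial_yg_n\|,\|\partial_zg_n\|$ bounded \emph{uniformly in} $n$ (moreover $\partial_zg_n\equiv0$ under~\ref{a5} and $\partial_xg_n\equiv0$ under~\ref{a6}). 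Since the ``input'' $D_sX^n$ has already been bounded uniformly in $n$ in the previous step, the standard a priori estimate for linear BSDEs yields
\begin{equation*}
	\sup_n\sup_{s\in[0,T]}E\Big[\sup_t\|D_sY^n_t\|^2+\int_s^T\|D_sZ^n_u\|^2\,du\Big]\le C\,\sup_n\sup_{s,u}E\big[\|D_sX^n_u\|^2\big]<\infty,
\end{equation*}
and integrating over $s\in[0,T]$ shows $(Z^n)$ is bounded in $L^2([0,T];\mathbb{D}^{1,2})$, uniformly in $n$. Since $Z^{n_k}\to Z$ in $\mathcal{H}^2$ on each interval $[0,T-\delta^k]$ while this bound is uniform on all of $[0,T]$, the closedness argument on $L^2(\Omega\times[0,T])$ gives $Z\in L^2([0,T];\mathbb{D}^{1,2})$ after letting $\delta^k\downarrow0$, i.e.\ $Z$ is Malliavin differentiable; differentiability of $X_t$ and $Y_t$ is already contained in~(ii). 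Without~\ref{a5}/\ref{a6} the coefficients $\partial_yg_n,\partial_zg_n$ (and $\nabla h_n$ under~\ref{b1}) blow up with $n$, so the uniform $\mathbb{D}^{1,2}$-bound on $Z^n$ is lost; this is the crux of the proof and the main obstacle to be overcome.
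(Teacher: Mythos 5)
Your proposal is correct, and for items (i) and (ii) it is essentially the paper's argument: uniform-in-$n$ control of $D X^n$ via the uniformly bounded drift $\tilde b_n$ and \eqref{eq: bound Mall derv X}, the $L^2$-convergence $X^n_t\to X_t$, and closedness of the Malliavin derivative (\cite[Lemma 1.2.3]{Nua06}); the only cosmetic difference in (ii) is that you differentiate the approximations $Y^n_t=v_n(t,X^n_t)$ and pass to the limit, whereas the paper applies the Lipschitz chain rule \cite[Proposition 1.2.4]{Nua06} directly to the limit representation $Y_t=v(t,X_t)$, using that $v$ inherits the uniform Lipschitz bound \eqref{eq:Lipschitz vn}. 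For item (iii) you take a genuinely different route. The paper stays at the level of the \emph{limit} equation: under \ref{a5} it writes $\int_t^TZ_s\,dW_s=h(X_T)-Y_t+\int_t^Tg(s,X_s,Y_s)\,ds$, notes that the right-hand side is Malliavin differentiable by the chain rule (here \ref{b2} makes $h$ Lipschitz and \ref{a5} makes $g$ Lipschitz in $(x,y)$), and invokes \cite[Lemma 2.3]{Pardoux-Peng92} to differentiate the stochastic integral; under \ref{a6} it treats the backward equation as a standard Lipschitz BSDE with $C^1$ generator and Malliavin-differentiable terminal condition $h(X_T)$ and cites \cite[Proposition 5.3]{KPQ97}. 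You instead linearise the \emph{approximating} BSDEs, exploit that under \ref{b2} together with \ref{a5} or \ref{a6} the mollified derivatives $\nabla h_n$, $\partial_xg_n$, $\partial_yg_n$, $\partial_zg_n$ are bounded uniformly in $n$, derive a uniform $\mathbb{D}^{1,2}$-bound on $Z^n$ from the linear BSDE estimate with the already-controlled input $D_sX^n$, and close via weak compactness. Both arguments are sound and rest on the same structural role of \ref{a5}/\ref{a6}; the paper's is shorter because it delegates to known BSDE differentiability results, while yours is more self-contained and makes explicit why a naive chain rule on $Z^n_t=D_xv_n(t,X^n_t)\sigma$ fails (no uniform Hessian bounds), at the cost of having to handle the passage from convergence on $[0,T-\delta^k]$ to the whole interval, which you do correctly via the uniform bound.
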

\begin{proof}
 Consider the sequence $(X^n)$ constructed in the proof of Theorem \ref{thm:exists fbsde}.
 Recall that under both \ref{b1} and \ref{b2} we have
 \begin{equation*}
	X^n_t \to X_t \quad \text{in}\quad L^2 \quad\text{for every } t\in [0,T]
\end{equation*}
and  (see Equation \eqref{eq:comp crit} with  $t'=0$ therein) we have
\begin{align*}
	E\left[| D_tX^n_s|^2 \right] \le \sum_{i=1}^dE\left[\Big\|D_t^i X^{n}_s \Big\|^2 \right] \le d{\cal C}_{d,T}( \|\tilde b_n\|_\infty)t
\end{align*}
where $\tilde b_n$ is a uniformly bounded sequence.
Therefore, by \cite[Lemma 1.2.3]{Nua06} we conclude that $X_t$ is Malliavin differentiable for all $t\in [0,T]$.
In particular, $\sup_tE[|D_tX_s|^2]<\infty$.
To deduce the differentiability of $Y$, recall that for every $\delta >0$ and every $t \in [0,T - \delta]$ the function $x\mapsto v(t, x)$ is Lipschitz continuous. Thus, it follows by chain rule (see \cite[Proposition 1.2.4]{Nua06}) that $Y_t$ is Malliavin differentiable for all $t\in[0,T - \delta]$.

When condition \ref{b2} is satisfied, the function $x \mapsto v(t,x)$ is Lipschitz continuous for every $t\in [0,T]$.
Again by chain rule, $Y_t$ is Malliavin differentiable for all $t\in[0,T]$.
Thus, $(X_t,Y_t)$ is Malliavin differentiable.

If furthermore condition \ref{a5} holds, then in view of the identity
\begin{equation*}
\int_t^TZ_sdW_s=h(X_T)-Y_t+\int_t^Tg(s,X_s,Y_s)ds,
\end{equation*}
it follows from the chain rule and \cite[Lemma 2.3]{Pardoux-Peng92} that
$Z_t$ is Malliavin differentiable for all $t\in[0,T]$.
If we rather assume \ref{a6}, then since $X_t$ is Malliavin differentiable, the Malliavin differentiability of $(Y_t,Z_t)$ follows from the chain rule and \cite[Proposition 5.3]{KPQ97} since $\int_0^TE[|D_sh(X_T)|^2]\,ds<\infty.$
\end{proof}

  \subsubsection{Weighted Sobolev differentiable flow}
We now investigate differentiability properties of the solution with respect to the initial variable of the forward process.
  Let $0\le s\le t\le T$ and $x \in \mathbb{R}^d$.
  We denote by $(X^{s,x}, Y^{s,x}, Z^{s,x})$ the solution of the FBSDE
  \begin{equation}
\label{eq:fbsde sob}
 	\begin{cases}
		X_t = x + \int_s^tb(u,X_u, Y_u, Z_u)\,\diffns u + \int_0^t\sigma dW_{u}\\
		Y_t = h(X_T) + \int_t^Tg(u,X_u,Y_u, Z_u)\diffns u - \int_t^TZ_u\,dW_{u}\quad t\in [s,T]
	\end{cases}
\end{equation}
given by Theorem \ref{thm:exists fbsde}.
The next result gives regularity of the function $x\mapsto (X^{s,x}, Y^{s,x} )$.
Some notation need to be introduced before we state the result.
Let $\rho$ be a weight function.
That is, a measurable function $\rho :\mathbb{R}^d\to [0,\infty)$ satisfying
\begin{equation*}
	\int_{\mathbb{R}^d}(1 + |x|^p)\rho(x)\,dx<\infty
\end{equation*}
for some $p>1$.
Let $L^p(\mathbb{R}^d,\rho)$ be the weighted Lebesgue space of (classes) of measurable functions $f:\mathbb{R}^d\to \mathbb{R}^d$ such that
\begin{equation*}
	\|f\|_{L^p(\mathbb{R}^d,\rho)}^p:=\int_{\mathbb{R}^d}|f(x)|^p\rho(x)\,dx<\infty.
\end{equation*}
For functions $f:\mathbb{R}^d\to \mathbb{R}^l$ satisfying this integrability property we analogously define the space $L^p(\mathbb{R}^l, \rho)$.
Further denote by ${\cal W}^{1}_p(\mathbb{R}^d,\rho)$ the weighted Sobolev space of functions $f \in L^p(\mathbb{R}^d,\rho)$ admitting weak derivatives of first order $\partial_{x_i}f$ and such that
\begin{equation*}
	\|f\|_{{\cal W}^1_p(\mathbb{R}^d, \rho)} := \|f\|_{L^p(\mathbb{R}^d,\rho)} + \sum_{i=1}^d\|\partial_{x_i}f\|_{L^p(\mathbb{R}^d,\rho)}<\infty.
\end{equation*}
\begin{proposition}
\label{pro:sobolev flow}
	Assume that the conditions \ref{a1}-\ref{a4} are satisfied.
	\begin{itemize}
		\item[(i)] If condition \ref{b1} holds, then we have
	\begin{equation}
	\label{eq:x sobolev}
	X^{s,x}_t \in L^2\big(\Omega; {\cal W}^1_p(\mathbb{R}^d,\rho)\big)\quad \text{for every } t \in [0,T]
	\end{equation}
	and if $l=1$, then for every bounded open set $U \subseteq \mathbb{R}^d$ we have
	\begin{equation}
	\label{eq:y sobolev}
		Y^{s,x}_t \in L^2\big(\Omega; {\cal W}^1_1(U)\big) \quad \text{for every } t \in [0,T - \delta ]\quad \text{and every } \delta >0.
	\end{equation}	
	\item[(ii)] If condition \ref{b2} holds and $l=1$, then \eqref{eq:x sobolev} and \eqref{eq:y sobolev} hold with $\delta = 0$.
	\end{itemize}
\end{proposition}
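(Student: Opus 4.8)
The plan is to run the argument of Theorem~\ref{thm:exists fbsde} one level up, at the level of the forward flow $x\mapsto X^{s,x}_{t}$. I would keep the smooth, compactly supported approximations $(b_n),(g_n),(h_n)$, the FBSDE solutions $(X^{s,x,n},Y^{s,x,n},Z^{s,x,n})$ and the classical decoupling fields $v_n$ from that proof, and recall that $X^{s,x,n}_t$ solves the decoupled stochastic differential equation
\begin{equation*}
	X^{s,x,n}_t = x + \int_s^t\tilde b_n(u,X^{s,x,n}_u)\diff u + \int_s^t\sigma\diff W_u,\qquad \tilde b_n(u,x):= b_n\big(u,x,v_n(u,x),D_xv_n(u,x)\sigma\big).
\end{equation*}
By Lemma~\ref{lem:linear tilde b}, under either \ref{b1} or \ref{b2} the drift $\tilde b_n$ is of class $C^1$, has compact support in $x$ and is bounded by a constant that does not depend on $n$, on all of $[0,T]$; hence $x\mapsto X^{s,x,n}_t$ is a genuine $C^1$ flow, and the task reduces to controlling its weighted Sobolev norm uniformly in $n$ and then passing to the limit.

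The central estimate is the following: because the drifts $\tilde b_n$ are bounded uniformly in $n$, the a priori bounds for Sobolev-differentiable flows of stochastic differential equations with bounded measurable drift (\cite{MMNPZ13}, see also \cite{SDERough}; note also that the Malliavin-derivative bounds \eqref{eq:comp crit} and \eqref{eq: bound Mall derv X} already used above control the spatial Jacobian, since the noise is additive) provide, for every $q>1$, bounds of the form $\sup_nE[|X^{s,x,n}_t|^q]\le C_q(1+|x|^q)$ and $\sup_n\sup_xE[|D_xX^{s,x,n}_t|^q]<\infty$, in which $C_q$ depends on $\tilde b_n$ \emph{only through} $\|\tilde b_n\|_\infty$ and, crucially, not through its derivatives. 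Taking $q=p$ and $q=2$, integrating in $x$ against the weight $\rho$, using $\int_{\mathbb{R}^d}(1+|x|^p)\rho(x)\diff x<\infty$ and Fubini, I would obtain
\begin{equation*}
	\sup_nE\big[\|X^{s,\cdot,n}_t\|_{{\cal W}^1_p(\mathbb{R}^d,\rho)}^2\big]<\infty\qquad\text{for every }t\in[0,T],
\end{equation*}
and the same bound with $\rho$ replaced by $\mathbf 1_U$ for any bounded open $U\subseteq\mathbb{R}^d$.

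Since $L^2(\Omega;{\cal W}^1_p(\mathbb{R}^d,\rho))$ is reflexive (for $p>1$), this yields a weakly convergent subsequence $X^{s,\cdot,n}_t\rightharpoonup\Xi$. To identify the limit, test against functionals of the form $\omega\mapsto\phi(\omega)\int f\,\psi\rho\,\diff x$ with $\psi\in C_c^\infty(\mathbb{R}^d)$: weak convergence gives $\int X^{s,\cdot,n}_t\psi\rho\,\diff x\rightharpoonup\int\Xi\,\psi\rho\,\diff x$ in $L^2(\Omega)$, while the pointwise-in-$x$ convergence $X^{s,x,n}_t\to X^{s,x}_t$ in $L^2(\Omega)$ established in the proof of Theorem~\ref{thm:exists fbsde}, combined with the uniform moment bounds and dominated convergence, gives $\int X^{s,\cdot,n}_t\psi\rho\,\diff x\to\int X^{s,x}_t\psi\rho\,\diff x$ strongly; a countable separating family of $\psi$'s then forces $\Xi=X^{s,\cdot}_t$, and weak lower semicontinuity of the norm gives \eqref{eq:x sobolev}, for all $t\in[0,T]$ under either \ref{b1} or \ref{b2} since $\tilde b_n$ is uniformly bounded on $[0,T]$.

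For \eqref{eq:y sobolev}, take $l=1$ so the decoupling field is scalar, fix a bounded open $U$, and use the same argument with weight $\mathbf 1_U$ to get $X^{s,\cdot}_t\in L^2(\Omega;{\cal W}^1_p(U))$. By Lemma~\ref{lem:a priori pde} and the locally uniform convergences $v_n\to v$, $D_xv_n\to w$ from the proof of Theorem~\ref{thm:exists fbsde}, $v(t,\cdot)$ is Lipschitz with a constant $C_\delta$, valid for $t\in[0,T-\delta]$ under \ref{b1} and for all $t\in[0,T]$ (with $\delta=0$) under \ref{b2}; composing the scalar Lipschitz map $v(t,\cdot)$ with $x\mapsto X^{s,x}_t(\omega)\in{\cal W}^1_p(U)$ via the Sobolev chain rule gives $Y^{s,x}_t=v(t,X^{s,x}_t)\in{\cal W}^1_1(U)$ for $P$-a.e.\ $\omega$, with $|D_xY^{s,x}_t|\le C_\delta|D_xX^{s,x}_t|$, so that $\|Y^{s,\cdot}_t\|_{{\cal W}^1_1(U)}\le C(1+\|X^{s,\cdot}_t\|_{{\cal W}^1_p(U)})$ by $|Y^{s,x}_t|\le R$ and Hölder's inequality on $U$; taking $L^2(\Omega)$-norms finishes the proof. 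The hard part is the uniform Sobolev bound of the second paragraph: the regularized drifts $\tilde b_n$ have spatial derivatives that blow up as $n\to\infty$, so the Jacobian estimate cannot use a Gronwall argument and must be extracted solely from $\|\tilde b_n\|_\infty$ — this is precisely what the singular-SDE flow theory of \cite{MMNPZ13} supplies, and everything else (reflexivity, identification of the weak limit, Sobolev chain rule) is routine.
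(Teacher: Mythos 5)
Your proof is correct in substance, and the treatment of $Y$ (composing the Lipschitz decoupling field $v(t,\cdot)$ with the Sobolev map $x\mapsto X^{s,x}_t(\omega)$ via the chain rule, with the restriction $l=1$) is essentially identical to the paper's, which invokes \cite[Theorem 1.1]{Leo-Mor05} for exactly this composition. For the forward component, however, you take a genuinely different and longer route. The paper does not return to the approximating sequence at all: it observes that, by Theorem \ref{thm:exists fbsde}, the limiting process $X^{s,x}$ already solves the \emph{decoupled} SDE with drift $x\mapsto b(t,x,v(t,x),w(t,x)\sigma)$, which is bounded and measurable under either \ref{b1} or \ref{b2}, and then cites \cite[Theorem 3]{MNP2015} as a black box to conclude $X^{s,x}_t\in L^2(\Omega;{\cal W}^1_p(\mathbb{R}^d,\rho))$ directly. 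You instead re-run the internal mechanism of that theorem: uniform-in-$n$ moment bounds on the Jacobians $D_xX^{s,x,n}_t$ depending only on $\|\tilde b_n\|_\infty$, integration against the weight, weak compactness in the reflexive space $L^2(\Omega;{\cal W}^1_p(\mathbb{R}^d,\rho))$, and identification of the weak limit with $X^{s,\cdot}_t$ via the pointwise $L^2$ convergence from Step 2b. Both arguments ultimately rest on the same singular-flow estimates of \cite{MMNPZ13,MNP2015}; the paper's version is shorter and avoids the limit-identification step, while yours makes explicit where the uniformity in $n$ comes from and keeps the whole proof self-contained relative to the approximation scheme already built in Theorem \ref{thm:exists fbsde}. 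The only points you should tighten are (a) the joint measurability of $(x,\omega)\mapsto X^{s,x}_t(\omega)$, needed before $X^{s,\cdot}_t$ can be regarded as an element of $L^2(\Omega;{\cal W}^1_p(\mathbb{R}^d,\rho))$, and (b) the precise statement of the Jacobian bound you import (it is the content of the flow theory of \cite{MNP2015} rather than an immediate consequence of \eqref{eq:comp crit}--\eqref{eq: bound Mall derv X}); neither is a gap in the strategy.
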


\begin{proof}
	Recall from Theorem \ref{thm:exists fbsde} that the solution $(X,Y,Z)$ of the FBSDE \eqref{eq:fbsde} satisfies $Y_t^{s,x} = v(t, X^{s,x}_s)$ and $Z^{s,x}_t = w(t, X_t^{s,x})\sigma$ for some bounded measurable function $v:[0,T]\times\mathbb{R}^d\to \mathbb{R}^l$ and a measurable function $w:[0,T]\times \mathbb{R}^d\to \mathbb{R}^{l\times d}$.
	Thus, $X^{s,x}$ satisfies
	\begin{equation*}
		X^{s,x}_t = x + \int_s^tb(u, X^{s,x}_u, v(u, X^{s,x}_u), w(u,X^{s,x}_u)\sigma)\,du + \sigma W_t.
	\end{equation*}
	Under both conditions \ref{b1} and \ref{b2} the function $x\mapsto b(t, x, v(t,x),w(t,x)\sigma)$ is bounded and measurable.
	Thus, it follows from \cite[Theorem 3]{MNP2015} that $X^{s,x}_t\in L^2\big(\Omega; {\cal W}^1_p(\mathbb{R}^d,\rho)\big).$
	
	To deduce differentiability of $Y$, recall that for every $\delta >0$ and every $t \in [0,T - \delta]$ the function $x\mapsto v(t, x)$ is Lipschitz continuous.
	Let $\rho$ be the weight function given by $\rho(x):= 1_{U}(x)$.
	There is a measurable $N\subseteq\Omega$ such that $X^{s,\cdot}_t(\omega)\in {\cal W}^1_p(U)$ for all $\omega \in N^c$ and $P(N)=0$.
	Thus, by the chain rule formula, of \cite[Theorem 1.1]{Leo-Mor05}, for every $\omega \in N^c$ the function $Y^{s,x}_t(\omega) = v(t, X^{s,x}_t(\omega))$ belongs to the Sobolev space ${\cal W}^1_1(U)$.

	When condition \ref{b2} is satisfied, the function $x \mapsto v(t,x)$ is Lipschitz continuous for every $t\in [0,T]$.
	The claim (ii) then follows from the same arguments as above.
\end{proof}

\begin{appendix}
\section{A priori estimations for quasi-linear PDEs}
For the reader's convenience, in this appendix we collect some a priori estimations for quasi-linear PDEs.
These are fundamental for the proofs of our main results.
Different versions of these estimates can be found e.g. in \cite{Delarue03,Ma-Zhang11,Liberman} or \cite{Ladyz-Sol-Ura68}.
The results we present here are taken from \cite{Delarue03,Ma-Zhang11}.

Recall that the Sobolev space ${\cal W}^{1,2}_{p, \mathrm{loc}}((0,T)\times \mathbb{R}^d,\mathbb{R}^l)$ is the space of all functions
$u:(0,T)\times \mathbb{R}^d\to \mathbb{R}^l$ such that for all $r>0$,
\begin{equation*}
	\int_{(0,T)\times B_r(0)}\Big(|u|^{p} + |\partial_tu|^{p} + |D_x u|^{p} + |D_{xx} u|^{p} \Big)\,dx\,dt<\infty
\end{equation*}
and consider the quasilinear parabolic PDE
\begin{equation}
\label{eq:pde}
	\begin{cases}
		\partial_tv(t,x) + {\cal L}v(t,x) + g(t,x, v(t,x),D_x v(t,x)\sigma) = 0\\
		v(T,x) = h(x)
	\end{cases}
\end{equation}
where ${\cal L}$ is the second order differential operator
\begin{equation*}
  	{\cal L}v := b(t,x, v, D_x v\sigma)D_x v + \frac 12\text{trace}(\sigma\sigma^*D_{xx} v).
\end{equation*}

\begin{theorem}(\cite[Theorem 3.1 $\&$ Lemma 6.2]{Ma-Zhang11})
\label{thm:pde estimate}
	Assume that the conditions \ref{a1}-\ref{a4} are satisfied, and further assume that the functions $b$, $g$ and $h$ are bounded, smooth and with bounded derivatives.
	Let $v$ be the unique classical solution of \eqref{eq:pde}.
	Then for any $\delta>0$ there is $\alpha \in (0,1)$ and constants $C, C_\delta$ and $C_{\delta, \alpha}$ depending on $k_1, k_2, k_3,\Lambda, T,l$, $m$, and the bound of $b,g$ and which do not depend on the derivatives of $b,g$ such that
	\begin{itemize}
		\item[(i)] $|D_x v(t,x) |\le C_\delta$ for all $(t,x) \in [0,T - \delta]\times \mathbb{R}^d$
		\item[(ii)] for all $(t,x),(t',x')\in [0,T - \delta]\times \mathbb{R}^d$, it holds that
		\begin{equation*}
			|D_x v(t,x) - D_x v(t',x')|\le C_{\delta, \alpha}(|x-x'|^{\alpha} + |t-t'|^{\alpha/2}).
		\end{equation*}
		\item[(iii)] for every bounded domain $\mathcal{O} \subseteq \mathbb{R}^d$ and $p\ge 2$ it holds
		\begin{equation*}
			\int_0^{T-\delta}\int_{\mathcal{O}}\Big[ |D_x v(t,x)|^p + |D_{xx} v(t,x)|^p \Big]\,dx\,dt \le C_\delta^p|\mathcal{O}|
		\end{equation*}
		where $|\mathcal{O}|$ is the Lebesgue measure of $\mathcal{O}$.
	\end{itemize}
	If $h$ is twice continuously differentiable with bounded first and second derivatives, then (i), (ii) and (iii) hold with $\delta =0$ and $C_0$ may depend on $\|D_x h\|_\infty$ and $\|D_{xx} h\|_\infty$ as well.
\end{theorem}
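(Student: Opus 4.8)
The plan is to reduce the quasilinear equation \eqref{eq:pde} to a \emph{linear}, non-divergence-form parabolic equation with bounded measurable lower-order coefficients and a constant (hence smooth, uniformly parabolic) principal part, and then to invoke the classical interior $L^p$ and Schauder-type estimates, all the while keeping careful track of which structural quantities the constants depend on. Since by \ref{a2} the matrix $\sigma\sigma^*$ is constant and uniformly positive definite, the linear change of spatial variables $x\mapsto(\sigma\sigma^*)^{-1/2}x$ reduces $\mathcal L$ to $\tilde b\cdot D_x+\tfrac12\Delta$, so we may take the heat operator as principal part. The first ingredient is the sup-bound $\norm{v}_\infty\le R=k_3e^{Tk_2}$, which follows from the maximum principle for \eqref{eq:pde} together with the linear growth \ref{a1}, \ref{a3} and Gronwall's lemma (equivalently from the Feynman--Kac representation of $v$, exactly as in the proof of Lemma \ref{lem:a priori pde}); here $R$ depends only on $k_2,k_3,T$.

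With $\norm{v}_\infty\le R$ fixed, \eqref{eq:pde} takes the form $\partial_tv+\tfrac12\Delta v=f(t,x,D_xv)$ with $|f(t,x,p)|\le C(1+|p|)^2$, i.e.\ with \emph{quadratic} growth in the gradient, $C$ depending only on $k_1,k_2,R,\sigma$. The crux of the argument, and the step I expect to be the main obstacle, is the interior gradient bound (i): for quasilinear parabolic equations with bounded solution and quadratic gradient growth this is the Ladyzhenskaya--Solonnikov--Uraltseva interior estimate obtained by the Bernstein technique. Differentiating \eqref{eq:pde} in $x_k$, the function $w_k:=\partial_{x_k}v$ solves a linear equation whose coefficients, after using \ref{a1} and \ref{a3}, are controlled by $|D_xv|$; a maximum-principle/energy argument on parabolic cylinders $[t_0-\delta^2,t_0]\times B_r$ with a suitable cut-off then yields $|D_xv(t_0,x_0)|\le C_\delta$, where $C_\delta$ depends only on $\delta$, the ellipticity constant $\Lambda$, $T$, the dimensions $d,l$ and the growth constants $k_1,k_2,k_3$ --- and, crucially, \emph{not} on any derivative of $b$ or $g$. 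The parameter $\delta$ enters because near $t=T$ the only a priori control on $w_k$ comes from the terminal data; if $h$ is Lipschitz one propagates $\norm{D_xh}_\infty$ forward in time and the estimate holds with $\delta=0$.

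Given (i) on $[0,T-\delta/2]$, the remaining items follow by freezing the nonlinearity: set $\hat b(t,x):=b(t,x,v(t,x),D_xv(t,x)\sigma)$ and $\hat g(t,x):=g(t,x,v(t,x),D_xv(t,x)\sigma)$, which by Steps~1--2 are bounded on $[0,T-\delta/2]\times\mathbb R^d$ by a constant depending only on the structural data. Then $v$ solves the linear, uniformly parabolic, constant-principal-part equation $\partial_tv+\tfrac12\Delta v+\hat b\cdot D_xv+\hat g=0$, and the interior parabolic $L^p$ (Calder\'on--Zygmund) estimate gives, for every bounded domain $\mathcal O$ and every $p\ge2$, the bound $\int_0^{T-\delta}\int_{\mathcal O}(|D_xv|^p+|D_{xx}v|^p)\,dx\,dt\le C_\delta^p|\mathcal O|$, which is (iii). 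Choosing $p>d+2$ and applying the parabolic Sobolev embedding $\mathcal W^{1,2}_p\hookrightarrow C^{1+\alpha,(1+\alpha)/2}$ with $\alpha=1-(d+2)/p$ yields the gradient H\"older bound (ii). Finally, when $h\in C^2_b$, one combines the interior estimates on $[0,T/2]$ with a terminal-layer estimate on $[T/2,T]$ obtained from the same $L^p$/Schauder machinery applied up to the terminal slice (now using $\norm{h}_\infty,\norm{D_xh}_\infty,\norm{D_{xx}h}_\infty$), and a covering argument gives (i)--(iii) with $\delta=0$.
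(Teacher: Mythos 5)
First, a point of comparison: the paper does not prove this statement at all --- it is imported verbatim as \cite[Theorem 3.1 \& Lemma 6.2]{Ma-Zhang11}, and the appendix explicitly says the results there are ``taken from'' \cite{Delarue03,Ma-Zhang11}. So your write-up is necessarily a different route from the paper's (which is a citation), and the right question is whether your reconstruction is sound. Your overall strategy --- sup bound via Feynman--Kac/maximum principle, then reduction to a linear non-divergence equation with constant principal part and bounded measurable lower-order data, then interior Calder\'on--Zygmund estimates plus the embedding $\mathcal W^{1,2}_p\hookrightarrow C^{1+\alpha,(1+\alpha)/2}$ for $p>d+2$ --- is indeed the standard proof and essentially what Ma and Zhang do.

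There is, however, one genuine gap in the step you yourself flag as the crux. The Bernstein argument as you describe it --- differentiate \eqref{eq:pde} in $x_k$ and apply a maximum principle to $w_k=\partial_{x_k}v$ --- produces coefficients involving $\partial_xb$, $\partial_yb$, $\partial_zb$ and the corresponding derivatives of $g$. Conditions \ref{a1} and \ref{a3} control only the \emph{size} of $b$ and $g$, not their derivatives, so the resulting bound on $|D_xv|$ would depend on exactly the quantities the theorem forbids. Moreover, for $l>1$ the equation is a system, and a componentwise maximum principle for $w_k$ is not available; one must work with $|D_xv|^2$ and exploit the diagonal, constant principal part. The good news is that under the theorem's hypotheses this step is superfluous: since $b$ and $g$ are assumed \emph{bounded}, the frozen coefficients $\hat b(t,x)=b(t,x,v,D_xv\sigma)$ and $\hat g(t,x)=g(t,x,v,D_xv\sigma)$ are bounded measurable with norms controlled by $\norm{b}_\infty,\norm{g}_\infty$ \emph{before} any gradient bound is known --- there is no circularity. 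So you should run your third paragraph first: interior $L^p$ estimates on unit parabolic cylinders (absorbing $\hat b\cdot D_xv$ by interpolation of $\norm{D_xv}_{L^p}$ between $\norm{v}_{L^p}$ and $\norm{D_{xx}v}_{L^p}$) give (iii); translation invariance of the cylinders plus the Sobolev embedding then give (i) and (ii) uniformly on $[0,T-\delta]\times\mathbb R^d$, with constants depending only on $\Lambda,d,l,T,\delta$, $R=k_3e^{Tk_2}$ and the bounds of $b,g$. The parameter $\delta$ enters only through the distance of the cylinders from the terminal slice, and the $C^2_b$ case is handled, as you say, by global-in-time $L^p$/Schauder estimates whose constants may then depend on $\norm{D_xh}_\infty$ and $\norm{D_{xx}h}_\infty$. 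With the Bernstein paragraph deleted or replaced by this observation, the proof is correct.
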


\begin{theorem}(\cite[Theorems 1.3 $\&$ 2.9]{Delarue03})
\label{thm:pde estimate Delarue}
	Assume that the conditions \ref{a1}-\ref{a4} are satisfied and that $h$ is $\alpha$-H\"older continuous.
	Let $v$ be a solution of \eqref{eq:pde} in the space ${\cal W}^{1,2}_{d+1, \mathrm{loc}}((0,T)\times \mathbb{R}^d,\mathbb{R}^l)$.
	Then there are constants $C>0$ and $\alpha'\in (0, \alpha]$ depending only on $k_1,k_2, k_3, \Lambda, T,l$ and $m$ such that
	\begin{equation*}
		|v(t,x) - v(t',x')| \le C(|x - x'|^{\alpha'} + |t-t'|^{\alpha'/2})
	\end{equation*}
	for every $(t,x), (t',x')\in [0,T]\times \mathbb{R}^d$.
	If $\alpha = 1$, then it holds that
	\begin{equation*}
		|D_x v(t,x)| \le C \quad \text{for every } (t,x) \in [0,T]\times \mathbb{R}^d.
	\end{equation*}
\end{theorem}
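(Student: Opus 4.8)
The strategy is the classical one for quasilinear uniformly parabolic equations; the only subtlety is that every constant must be traced to depend solely on the structural data $k_1,k_2,k_3,\Lambda,T$ and the dimensions $d,l$ (and on the modulus of continuity of $h$), and not on any further regularity of $b,g,h$. Since in the applications $v$ is a classical solution associated with mollified coefficients, I may assume $b,g,h$ smooth provided the bounds below are uniform in their smoothness norms; the case of a genuine $\mathcal{W}^{1,2}_{d+1,\mathrm{loc}}$ solution then follows either by approximation using these uniform bounds or by running the De Giorgi--Nash--Moser scheme directly on the strong solution (which is legitimate since membership in $\mathcal{W}^{1,2}_{d+1,\mathrm{loc}}$ makes $v$ continuous and the equation hold a.e., so that Bony's maximum principle and the comparison principle apply). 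First I record the $L^\infty$ bound: applying the maximum principle to $e^{-k_2(T-t)}|v|$ and using \ref{a3}--\ref{a4} gives $\|v\|_\infty\le k_3e^{k_2T}=:R$, exactly as in Lemma \ref{lem:a priori pde}. Writing the equation as $\partial_tv+\frac12\mathrm{trace}(\sigma\sigma^*D_{xx}v)+\beta\cdot D_xv+f=0$ with $\beta(t,x):=b(t,x,v,D_xv\sigma)$ and $f(t,x):=g(t,x,v,D_xv\sigma)$, assumptions \ref{a1}, \ref{a3} together with $\|v\|_\infty\le R$ give the structure bound $|\beta\cdot D_xv+f|\le\mu|D_xv|^2+C_0$ with $\mu,C_0$ depending only on $k_1,k_2,k_3,\sigma$.

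For the interior estimates I would invoke the De Giorgi--Nash--Moser / Krylov--Safonov machinery. Crucially, since $\sigma$ is constant the principal part $\frac12\mathrm{trace}(\sigma\sigma^*D_{xx}\cdot)$ has constant coefficients, so no H\"older-in-$(t,x)$ hypothesis on the leading coefficients is needed and all the constants below are genuinely structural. The truncation / weak-Harnack argument for equations with the above quadratic gradient structure (the term $\mu|D_xv|^2$ being absorbed through an exponential change of the unknown) yields oscillation decay on parabolic cylinders, hence interior H\"older continuity of $v$ with an exponent $\alpha''\in(0,1)$ and constant depending only on $R,\mu,C_0,\Lambda,d$. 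When in addition $h$ is Lipschitz, the Ladyzhenskaya--Solonnikova--Uraltseva interior gradient estimate for quasilinear parabolic equations (again using only the constant principal coefficients, the ellipticity $\Lambda$, the bound $R$ and the quadratic structure constant $\mu$) gives $\sup_{[0,T-\delta]\times\mathbb{R}^d}|D_xv|\le C_\delta$ for every $\delta>0$.

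To reach the terminal slice $t=T$ I would use barriers built from the modulus of continuity of $h$. Fix $(T,x_0)$; for $A$ and $C_1$ large enough (depending only on $\Lambda,\mu,C_0,\alpha$ and $[h]_{C^\alpha}$) the functions $w^\pm(t,x):=h(x_0)\pm[h]_{C^\alpha}(|x-x_0|^2+A(T-t))^{\alpha/2}\pm C_1(T-t)$ are a super- and a sub-solution of the frozen operator that dominate, resp.\ are dominated by, $v$ on the parabolic boundary of a small backward cylinder, so the comparison principle yields $|v(t,x)-h(x_0)|\le C(|x-x_0|^\alpha+(T-t)^{\alpha/2})$ near $(T,x_0)$. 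Interpolating this boundary modulus with the interior H\"older estimate of the previous paragraph produces the global bound with a single exponent $\alpha'\in(0,\alpha]$. If $\alpha=1$ one runs the same argument with smoothed linear barriers to obtain a spatial Lipschitz bound for $v(t,\cdot)$ uniform for $t\in[T-\delta_0,T]$ with $\delta_0$ structural, i.e.\ $|D_xv|\le C$ there; combining with the interior gradient estimate on $[0,T-\delta_0/2]\times\mathbb{R}^d$ covers all of $[0,T]\times\mathbb{R}^d$ and gives $\|D_xv\|_\infty\le C$.

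The main obstacle is precisely keeping every constant structural. For the H\"older estimates this is essentially automatic once one notices that the leading coefficients are constant, so De Giorgi--Nash--Moser / Krylov--Safonov apply with no smoothness input; but for the terminal-slice gradient bound when $\alpha=1$ one must be careful, since the interior gradient estimate and the boundary barrier estimate degenerate at complementary rates ($C_\delta\uparrow\infty$ as $\delta\downarrow0$, while the barriers only control a fixed boundary layer $[T-\delta_0,T]$), and it is their compatibility --- the width $\delta_0$ of the boundary layer being itself structural --- that makes the uniform global bound possible. This compatibility, and the precise book-keeping of the constants, is exactly what \cite[Theorems~1.3~\&~2.9]{Delarue03} provide. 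An alternative I would keep in reserve is the probabilistic route of \cite{Delarue03}: write $v(t,x)=Y^{t,x}_t$ via the nonlinear Feynman--Kac representation and read the H\"older (resp.\ Lipschitz) modulus of $v$ off an $L^2$-stability estimate for the associated FBSDE, combined with a Veretennikov--Zvonkin stability estimate for the forward SDE with bounded measurable drift.
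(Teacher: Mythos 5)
First, a point of reference: the paper gives no proof of Theorem \ref{thm:pde estimate Delarue} at all --- it is imported verbatim from \cite[Theorems 1.3 \& 2.9]{Delarue03}, whose argument is probabilistic (a nonlinear Feynman--Kac representation of $v$, Girsanov and Krylov-type estimates for the forward diffusion, and an induction over small time subintervals). That is precisely the ``alternative kept in reserve'' in your last sentence, so your main line of attack is a genuinely different, purely analytic route, and it has to be judged on its own.

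Judged on its own, it has a genuine gap: the unknown $v$ takes values in $\mathbb{R}^l$ with $l\ge 1$ arbitrary, so \eqref{eq:pde} is a \emph{system} coupled through $b(t,x,v,D_xv\sigma)$ and $g(t,x,v,D_xv\sigma)$. Every key device you invoke --- the De Giorgi--Nash--Moser / weak Harnack oscillation decay, the exponential change of unknown used to absorb the natural-growth term $\mu|D_xv|^2$, the comparison principle and the barrier construction at the terminal slice --- is intrinsically scalar; H\"older continuity of bounded weak solutions is false for general parabolic systems (De Giorgi's counterexample). The principal part here is diagonal with constant coefficients, which helps, but the coupling enters through a drift $\beta=b(t,x,v,D_xv\sigma)$ that is \emph{not} a priori bounded (under \ref{a1} it grows linearly in $D_xv\sigma$, which is exactly the quantity you are trying to control), so you cannot simply freeze it and run Krylov--Safonov componentwise without first establishing an integrability bound on $\beta$. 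This is the reason \cite{Delarue03} and \cite{Ma-Zhang11} prove these estimates probabilistically: the $L^2$-stability estimate for the associated FBSDE and the Girsanov argument are insensitive to $l$. Your sketch is essentially sound for $l=1$, but as a proof of the statement as the paper uses it (with $\mathbb{R}^l$-valued $Y$) the scalar machinery must either be replaced by the probabilistic argument or supplemented by a justified componentwise reduction. A smaller remark in the same vein: the $L^\infty$ bound via the maximum principle applied to $e^{-k_2(T-t)}|v|$ also needs a system-compatible substitute; the paper's own Lemma \ref{lem:a priori pde} obtains $R=k_3e^{Tk_2}$ by Girsanov and Gronwall applied to the BSDE rather than by a pointwise maximum principle.
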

  \end{appendix}


\vspace{.2cm}

	\noindent Peng Luo: Department of Statistics and Actuarial Science, University of Waterloo,
Waterloo,  ON,  N2L 3G1; Canada
 {\small\textit{E-mail address:} peng.luo@uwaterloo.ca}

  \vspace{.2cm}

	\noindent Olivier Menoukeu-Pamen: University of Liverpool Institute for Financial and Actuarial Mathematics, Department of Mathematical Sciences,
L69 7ZL, United Kingdom and African Institute for Mathematical Sciences, Ghana.
\small{\textit{E-mail address:} menoukeu@liverpool.ac.uk\\
Financial support from the Alexander von
Humboldt Foundation, under the programme financed by the German Federal Ministry of Education and Research
entitled German Research Chair No 01DG15010 is gratefully acknowledged.
  \vspace{.2cm}

	\noindent Ludovic Tangpi: Department of Operations Research and Financial Engineering, Princeton University, Princeton, 08540,
 NJ; USA.
 {\small\textit{E-mail address:} ludovic.tangpi@princeton.edu}\\

     \end{document}